\newtheorem{thm}{Theorem}[section]
\newtheorem{defn}[thm]{Definition}
\newtheorem{ex}[thm]{Example}
\newtheorem{lem}[thm]{Lemma}
\newtheorem{pro}[thm]{Proposition}
\newtheorem{cor}[thm]{Corollary}
\newtheorem{rem}[thm]{Remark}
\newtheorem{rems}[thm]{Remarks}
\newtheorem{q}{Question}
\newtheorem{claim}{Claim}
\def\Z{{\mathbb Z}}
\def\N{{\mathbb N}}
\def\R{{\mathbb R}}
\def\T{{\mathbb T}}
\def\cont{\mathfrak c}
\begin{document}
\noindent
\title{
On the existence of  topologies compatible with a  group duality   with predetermined properties}

\thanks{The second and third authors thank the  financial support of the Spanish AEI and FEDER UE funds. Grant: MTM2016-79422-P. 
}
  \author{Tayomara Borsich}
\address{Departamento de Algebra, Geometr\'{\i}a y Topolog\'{\i}a,
Universidad Complutense de Madrid,
 28040 Madrid, Spain}
\email{tborsich@ucm.es}

\author{Xabier Dom\'inguez}
\address{Departamento de Matem\'aticas, Universidade da Coru\'na,
 Spain}
\email{xabier.dominguez@udc.es}

\author{Elena Mart\'in-Peinador}
\address{ Instituto de Matem\'atica Interdisciplinar y Departamento de Algebra, Geometr\'{\i}a y Topolog\'{\i}a,
 Universidad Complutense de Madrid,
 28040 Madrid, Spain}

\email{em\_peinador@mat.ucm.es}

\subjclass[2010]{Primary 22A05; Secondary 22D35}

\date{}

\keywords{ Group duality, compatible topology, equicontinuous subsets, $k$-group, $k_\T$-group, $g$-barrelled group, Pontryagin semireflexive group, complete group}

\maketitle
\begin{center}
	{\em Dedicated to Sergey Antonyan on his $65^{th}$ birthday. We deeply appreciate his friendship.}
\end{center}

\begin{abstract}
	The paper deals with group dualities. A group duality is simply a pair $(G, H)$ where $G$ is an abstract abelian group and $H$ a subgroup of characters defined on $G$. A group topology $\tau$ defined on $G$ is {\it compatible} with the group duality (also called dual pair) $(G, H)$  if $G$ equipped with $\tau$ has dual group $H$.
	
	  A topological group $(G, \tau)$ gives rise to the natural duality $(G, G^\wedge)$, where $G^\wedge$ stands for the group  of continuous characters on $G$. We     prove that  the existence of a  $g$-barrelled topology on $G$  compatible with the dual pair $(G, G^\wedge)$
	is equivalent to the semireflexivity in  Pontryagin's sense of the  group $G^\wedge$ endowed with the pointwise convergence topology $\sigma(G^\wedge, G)$.  We also deal with  $k$-group topologies. We prove that the existence of  $k$-group topologies on $G$  compatible with the duality $(G, G^\wedge)$  is determined by a sort of completeness property of its Bohr topology $\sigma (G, G^\wedge)$ (Theorem \ref{hay}). \\
	
\end{abstract}

For a topological abelian group $(G, \tau)$, denote by  $G^\wedge: = C Hom (G, \T)$  the group of all continuous characters on $G$.  The weak topology associated to $G^\wedge$ is  defined as the weakest topology on $G$ for which all the elements of $G^\wedge$ are continuous. It is a group topology which  will be denoted by $\tau^+$ (or by $\sigma (G, G^\wedge)$ if the duality $(G, G^\wedge)$ is the prevailing   point of view).  Clearly,  $\tau^+ \leq \tau$ and it is the bottom element in the duality $(G, G^\wedge)$.  By its relationship with the Bohr compactification of $(G, \tau)$, $\tau^+$  is called the Bohr topology of $(G, \tau)$.  It is  precompact and Hausdorff provided  $(G, \tau) $ has sufficiently many continuous characters. 
The question of when a  precompact and Hausdorff group topology on an abelian group is the Bohr topology corresponding to a locally compact group  has been considered in \cite{CTW},  in \cite{JG} and recently in \cite{HT}. The present paper was originated by a thorough reading of \cite{HT}. 

 More explicitly, the main  question in  \cite{HT} was:
 If  $(G, w)$ denotes  a totally bounded  abelian topological group (that is, precompact and Hausdorff),  is there a locally compact topology on $G$, say $\tau$, such that $\tau^+ = w$? 
 If such $\tau$ exists, it can be said in categorical language that $(G, w)$ is the Bohr reflection of $(G, \tau)$.
  The authors of \cite{HT}
   denote by $\mathcal B$ the class of all totally bounded abelian groups  which are the Bohr reflection of a locally compact group.
In the present  paper we  consider the question from another point of view.
As a matter of fact a precompact Hausdorff topological group $(G, w)$  is in $\mathcal B$ if  there is a locally compact topology in the duality $(G, G^\wedge)$, where  $G^\wedge $ denotes the character group of $(G, w)$.  Since in particular, every locally compact abelian group is $g$-barrelled, the question can be generalized to the following one:
\begin{q}
Let $(G, \tau)$ be an abelian  topological group. Under which conditions on $G$ or in $G^\wedge$ is there a $g$-barrelled topology in  the duality $(G, G^\wedge)$?
\end{q}

The $g$-barrelled groups were introduced  in \cite{CMT}.  In   Section \ref{main} we formulate their definition, and   we  obtain a necessary and sufficient condition for a  duality $(G, G^\wedge)$ to contain $g$-barrelled group topologies (Theorem \ref{existencia}).

 The existence of   dualities $(G, G^\wedge)$ {\bf without} $g$-barrelled topologies can  be  derived from deep  results of the  papers \cite{Auss1},  \cite{zn} and \cite{G}, where  the so called  ``Mackey problem for groups"  is solved in the negative. More precisely, the authors of those papers present examples of topological groups $(G, \tau)$ such that, the supremum  of the  family of all  the topologies on $G$ which are locally quasi-convex and  compatible with $\tau$ is  not compatible  with $\tau$.
  The Mackey topology on a topological group $(G, \tau)$  is defined as the maximum  -provided it exists- of all locally quasi-convex  topologies on $G$ compatible with $\tau$ (\cite{CMT}). The above  mentioned papers provide thus examples of locally quasi-convex    groups without a Mackey topology, which makes evident   the dissonance between the behaviour
 of  locally convex spaces, and that of  locally quasi-convex groups. We remind the reader that  the Mackey-Arens Theorem asserts that for a fixed locally convex space $(E, \rho)$, there exists a  maximum in the family of all  the topologies on $E$ that are  locally convex and  compatible with $\rho$.

 A $g$-barrelled {\bf locally quasi-convex} topology $\mu$ on a group $G$ is always the maximum of all the  topologies which are  locally quasi-convex and compatible with $\mu$ \cite[4.1]{CMT}.
  Thus, whenever the existence of a $g$-barrelled locally quasi-convex  topology in a fixed duality $(G, G^\wedge)$ can be guaranteed, it is unique  and
 it is  the  Mackey topology. Consequently, if the Mackey topology for a  topological group $(G, \tau)$ does not exist, the dual pair  $(G, G^\wedge)$ does not contain either a $g$-barrelled topology on $G$.  However, a Mackey topology may not be $g$-barrelled: these relationships, together with a grading of the Mackey property, are deeply analyzed in  \cite{JME}.
 In Theorem \ref{existencia} we give a necessary and sufficient condition for the existence of a $g$-barrelled topology in a  group duality.

   If in a given duality $(G, G^\wedge) $ there exists a locally quasi-convex  $g$-barrelled, non locally compact   topology $\nu$,  then $(G, \nu^+) \notin \mathcal B$. In this way a wealth of examples of groups  which are not in $\mathcal B$   can be obtained, a complement to  the results of \cite{HT}.

    The main results of the present paper are in   Section \ref{main}. Under the mild condition that a duality $(G, G^\wedge)$ is separated, we  prove (Theorem \ref{existencia}) that  the existence of a $g$-barrelled topology $\mu$ on $G$ such that $(G, \mu)^\wedge = G^\wedge$   is equivalent to the semireflexivity (in Pontryagin's sense) of the dual group $G^\wedge$ endowed with the pointwise convergence topology $\sigma (G^\wedge, G)$.
   If this holds,  $\mu$ is precisely the compact-open topology $\tau_\mathcal{K}$ on $G$ considered as the dual group of $(G^\wedge, \sigma(G^\wedge, G))$. Without requiring that $\tau_\mathcal{K}$ be compatible with the original topology of $G$, we  characterize when $(G, \tau_{\mathcal{K}})$ is a $g$-barrelled group in the duality that it generates (Theorem \ref{determined}).

  In Section \ref{kT-ext} we deal with the existence of  a $k$-group topology in a general  duality  $(G, G^\wedge) $.
  The $k$-groups, defined by Noble in \cite{noble}, constitute a class of abelian  topological groups that includes the locally compact abelian ones.  More generally,   all the topological groups that are $k$-spaces (in the ordinary sense of this term  for topological spaces) are $k$-groups.
   However there are $k$-groups  in the sense of Noble that are not $k$-spaces (See  \ref{prod lineas}).
    In  Section \ref{k-groups} we clarify these notions  and also recall  the  $k_\T$-groups introduced in \cite{TM}.
   	 The  $k_\T$-groups  are  relevant because of their connection with completeness. In \cite{BCMT} they appear while proving  that  the  Grothendieck Completeness Theorem, well known in the context of  locally convex spaces, does not admit a natural generalization  to locally quasi-convex groups. 
   	 
   We introduce   the notion of $k_\T$-extension of a precompact group topology.
   According to the property of being or not a $k_\T$-group, the family 
   $\mathcal P$ of precompact Hausdorff topologies on an abstract abelian group $G$  can be split into the two well differentiated subfamilies:

(I)  $\mathcal{P}_1 $ formed by  all those $w \in \mathcal P$ such that $(G, w)$ is a $k_\T$-group.  The elements $w \in \mathcal{P}_1 $ give rise to dualities $(G, G^\wedge)$ that contain at least one  $k$-group topology (Theorem \ref{hay}), which in turn can be locally compact, or metrizable or none of them, as shown in  Example \ref{prod lineas}.

(II)   $\mathcal{P}_2 $ formed by all those $w \in \mathcal P $ such that $(G, w)$ is {\bf not} a $k_\T$-group. The elements $w \in \mathcal{P}_2 $ produce dualities  $(G, G^\wedge)$ which do not contain a $k$-group topology (Theorem \ref{hay}).
   Nevertheless, the  $k_\T$-extension of each  $w \in \mathcal{P}_2$ is a  new precompact topology on $G$, which gives rise  to a group duality that {\bf contains}  $k$-group topologies (Theorem \ref{picture}). Further, $w$ and its $k_\T$-extension  produce the same   family of compact subsets.

In 	the last section we develop some  results about  the duality generated by a discrete group. This is a particular case of dualities which contain a $g$-barrelled topology $\tau$, such that $(G, \tau)$ has the Glicksberg property.

\section{Notation and Remarks} \label{NR}

		If $G$ is an abelian group, the set of  all homomorphisms from $G$ to $\T$ will be denoted by $ Hom (G,\T)$, where $\T$ is the unit complex circle. The elements of $Hom (G,\T)$ are called {\it characters} and $Hom (G,\T)$ has a group structure with respect to the pointwise operation. We shall write  $Hom_p (G, \T)$ to indicate that  $Hom (G,\T)$ is equipped with the pointwise convergence topology.
		
		A {\it group duality} is a pair $( G,H)$ where $G$ is an abelian group and $H$ is a subgroup of $Hom (G,\T)$. If $H$ separates points of $G$, the duality is said to be separated.
		
		If $(G, \tau)$ is a topological abelian group, its {\it dual group} or {\it character group} $G^\wedge:= CHom (G,\T)$  is the set of all continuous characters of $G$. It is a subgroup of $Hom (G,\T)$
		and if it   separates points of $G$, we say that $(G, \tau)$ is MAP (a shorthand for ``maximally almost periodic").
		
		Let $A\subseteq G$ and $B\subseteq G^\wedge$. The {\it polar } set of $A$ is defined by
		$$A^\triangleright=\{\chi\in G^\wedge~:~\chi(x)\in\T_+~~ \forall x\in A\}$$
		and the {inverse polar } of $B$ is defined by
		$$B^\triangleleft=\{x\in G~:~\chi(x)\in\T_+~~ \forall \chi\in B\}$$
		where $\T_+:=\{e^{2\pi it} : t\in [-\frac{1}{4}, \frac{1}{4}]\}$.
		
		A subset $A \subseteq G$ is {\it quasi-convex} if for every $x \in G\setminus A$ there exists an element $\phi \in A^{\triangleright}$ such that $\phi(x) \notin \T_+$.
		The {\it quasi-convex hull} of a  subset $M\subseteq G$ is the smallest quasi-convex subset of $G$ that contains $M$. It is straightforward to prove that it coincides with  $M^{\triangleright\triangleleft}$; in particular $M$ is quasi-convex if and only if $M=M^{\triangleright\triangleleft}$.  The topological group $(G, \tau) $ is said to be {\it locally quasi-convex } if it admits a basis of neighborhoods of zero formed by quasi-convex subsets.

		For a topological group $(G, \tau)$, the finest among all the locally quasi-convex topologies on $G$ coarser than $\tau$ is the {\it locally quasi-convex modification of  $\tau$}. It
		will be denoted by  $\mathcal Q \tau$ and has as a basis of 0-neighborhoods the family  $ \mathcal{B}  = \{U^{\triangleright\triangleleft},\;   U\in {\mathcal N} \}$, where ${\mathcal N}$ stands for the $\tau$-neighborhood system of the neutral element.
		
		Let  $(G, \tau)$ be a topological group. A subset $S\subseteq G^\wedge$ is equicontinuous with respect to $\tau$ if and only if $S\subseteq U^\triangleright$ for some $\tau$-neighborhood of zero $U$ in $G$.   This is a simpler 
		formulation, for abelian topological groups and families of continuous characters, of the well-known notion of equicontinuous set  of  mappings   in the context of uniform spaces.  
		
			For an abelian group $G$ and a subgroup of characters on $G$, say $ \mathcal L \subset Hom (G, \T)$, $\sigma(G, \mathcal L)$ will denote the weak topology on $G$ with respect to the family $\mathcal L$.
		If we start with a topological group $(G, \tau)$,    we  will replace $\sigma(G, G^\wedge)$ by  $\tau^+$, whenever this symbol  is easier to handle.
		
		Symmetrically, $\sigma(G^\wedge, G)$ denotes the weak  topology on $G^\wedge$ with respect to the evaluation mappings corresponding to the elements of $G$.

	If the context is clear,  $G^\wedge$ also denotes the character group {\bf endowed with the compact-open topology}. The latter is the natural topology to deal with reflexivity in Pontryagin's sense, so we often use the term {\it Pontryagin dual} to underline that   $G^\wedge$ carries the compact-open topology.
	
	If $\tau_1$ and $\tau_2$ are group topologies on $G$ we will say that they are {\it compatible} if $(G, \tau_1)^\wedge = (G, \tau_2)^\wedge$. For a dual pair $(G, H)$ a topology $\tau$ on $G$  is said to be {\it compatible with the duality $(G,H)$} or simply {\it to be  in  the duality} $(G, H)$ if $(G, \tau)^\wedge = H$.

	In the sequel it will be implicitly understood that all the groups considered are abelian.
\section{A short trip through $k$-spaces, $k$-groups and $k_\T$-groups}\label{k-groups}
Although the notion of $k$-space is well known, there is no uniformity in the literature whether it might be defined in the framework of Hausdorff spaces or simply in the context of topological spaces.   Therefore  we  make precise our starting point and the   properties which require further assumptions.

A topology $\tau$ on a set $X$ is called a $k${\it-topology} if the following condition holds:

\begin{center}
	Whenever $H \subseteq X$ is such that $H \cap K$ is $\tau$-closed in $K$
	for every $\tau$-compact $K$,\\ then $H$ is closed in $\tau$.
\end{center}

  If $\tau $ is a $k$-topology on $X$, the pair $(X, \tau) $ is called a $k$-space.  As a matter of fact, there is  a  $k$-topology associated to each topology $\tau$ on a set $X$. It  is commonly called the $k$-refinement of $\tau$ (in the literature, also the $k$-extension), and it is defined by its family of closed sets  as follows:

   \begin{defn}

   {\em	 The {\it $k$-refinement} $k(\tau)$ of  a topology  $\tau$ on  a set  $X$ is  defined by:
 $C \subset X $ is  closed in  $k(\tau)$ if $C \cap K$ is $\tau$-closed in $K$, for every $\tau$-compact subset $K \subset X$.  The pair $(X, k(\tau))$ is also called the $k$-refinement of $(X, \tau )$. } 
  \end{defn}

Clearly $k(\tau)$   is well defined, 
it is a $k$-topology  and $ \tau \leq k(\tau)$. The equality holds if $\tau$ is already a $k$-topology.

  The $k$-refinement $k(\tau)$ of a topology  $\tau$ on a set $X$
    gives rise to the same compact subsets as $\tau$.
       With the additional  assumption that  $(X, \tau) $ is Hausdorff, it holds that   $k(\tau) $ is the finest topology on $X$ with  this property. For this reason some authors  define the $k$-refinement only  for a {\bf Hausdorff} topology $\tau$.  In order to avoid confusion,
        we provide a proof of these facts.

\begin{pro}\label{kspaces}
	Let $(X, \tau)$ be a topological space and let $k(\tau)$ be the $k$-refinement of $\tau$. Then the following statements hold:
	\begin{itemize}
		\item [(1)]  $\tau $ and $k(\tau)$ give rise to the same compact subsets.
		\item [(2)]  $\tau $ and $k(\tau)$  induce the same topology on any compact $K \subset X$.
		\item[(3)] For every topological space $(Y, \mu)$, a  function $f: X \to Y$  such that   the restriction  $f_{|K}$ to any compact subset $K \subset X$ is continuous,   is necessarily continuous with respect to $k(\tau)$.
	\end{itemize}
Furthermore, $k(\tau)$ is the finest among the topologies on $X$ which 
 satisfy the condition of (2) or  (3).
If $(X, \tau)$ is Hausdorff,  $k(\tau)$ is also the finest among the topologies on $X$ which have  the same  compact subsets as $\tau$. 

\end{pro}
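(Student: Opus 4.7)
My strategy is to establish (2) first as the technical heart of the proposition. Items (1) and (3) then follow easily from (2) and the definition of $k(\tau)$ respectively, and the maximality statements come from applying the defining property of $k(\tau)$-closedness once more.

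For (2), the inequality $\tau|_K \le k(\tau)|_K$ is automatic from $\tau \le k(\tau)$; for the reverse, I would take a $k(\tau)$-closed $C \subseteq X$ and observe that $C \cap K$ is $\tau$-closed in $K$ by the very definition of $k(\tau)$ applied to the compact set $K$, so every $k(\tau)|_K$-closed subset of $K$ is already $\tau|_K$-closed. Part (1) then follows since compactness is intrinsic to the subspace topology: $k(\tau)$-compactness implies $\tau$-compactness by $\tau \le k(\tau)$, and a $\tau$-compact $K$ has the same subspace topology under both topologies by (2), hence is $k(\tau)$-compact. For (3), given $f:X\to Y$ with continuous restrictions to every compact and a closed $C \subseteq Y$, the set $f^{-1}(C) \cap K = (f|_K)^{-1}(C)$ is $\tau|_K$-closed for every $\tau$-compact $K$, so $f^{-1}(C)$ is $k(\tau)$-closed by definition and $f$ is $k(\tau)$-continuous.

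For the maximality clause attached to (2), I would take any topology $\eta$ with $\eta|_K = \tau|_K$ for every $\tau$-compact $K$; then for every $\eta$-closed $C$ the intersection $C \cap K$ is $\eta|_K$-closed, hence $\tau|_K$-closed, which is exactly the condition for $C$ to be $k(\tau)$-closed, giving $\eta \le k(\tau)$. The corresponding assertion for (3) is handled by an analogous universal-property argument, applying the hypothesis to the identity map into a suitable target whose restrictions to compacts are continuous thanks to (2).

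Finally, for the Hausdorff version, I would let $\eta$ be a Hausdorff topology with the same compact subsets as $\tau$. Then for any $\eta$-closed $C$ and any compact $K$, the set $C \cap K$ is $\eta$-closed inside the $\eta$-compact $K$, hence $\eta$-compact; by hypothesis it is $\tau$-compact, and by Hausdorffness of $\tau$ it is $\tau$-closed, hence $\tau|_K$-closed. Thus $C$ is $k(\tau)$-closed, giving $\eta \le k(\tau)$. The main and really only delicate point of the whole proof is this Hausdorff step, which genuinely uses that compact subsets of Hausdorff spaces are closed in order to translate $\eta$-information into $\tau$-information; everything else is just unpacking the definition of $k(\tau)$-closedness with the correct compact set.
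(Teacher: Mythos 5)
Your arguments for (1), (2), (3), the maximality clause for (2), and the Hausdorff statement are all correct, and they follow essentially the same route as the paper. The only organizational difference is that you prove (2) first directly from the definition of $k(\tau)$-closedness and then deduce (1) from the fact that compactness only depends on the subspace topology, whereas the paper proves (1) first by an open-cover argument (a $k(\tau)$-cover of a $\tau$-compact $L$ traces $\tau$-open sets on $L$, hence has a finite subcover) and then reads off (2); both are equally valid and equally short. Your proofs of (3) and of the Hausdorff clause coincide with the paper's, and you correctly isolate the use of Hausdorffness (compact $\Rightarrow$ closed) as the one nontrivial step there.

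The one place where your sketch does not close the argument is the maximality claim attached to (3), which the paper itself dismisses as straightforward. As you describe it --- ``applying the hypothesis to the identity map into a suitable target whose restrictions to compacts are continuous thanks to (2)'' --- the natural instantiation is $f=\mathrm{id}\colon X\to (X,k(\tau))$, whose restrictions to $\tau$-compact sets are continuous because $k(\tau)|_K=\tau|_K$; applying the condition of (3) for a topology $\eta$ then yields that $\mathrm{id}\colon (X,\eta)\to (X,k(\tau))$ is continuous, i.e.\ $k(\tau)\leq\eta$. That is the reverse of what ``finest'' requires: you need $\eta\leq\mathbf{}k(\tau)$. Indeed, under the literal reading of ``$\eta$ satisfies the condition of (3)'' (restrictions $\tau|_K$-continuous on $\tau$-compacts imply $\eta$-continuity), every topology finer than $k(\tau)$ --- in particular the discrete one --- satisfies it trivially, so no identity-map trick in that direction can work. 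To obtain $\eta\leq k(\tau)$ one must use the condition of (3) as a characterization of $\eta$-continuity (or otherwise extract from it that $\eta|_K\leq\tau|_K$ on every $\tau$-compact $K$), and then conclude exactly as in your maximality argument for (2). So either make that interpretation explicit and run the identity map the other way (start from the $\eta$-continuous map $\mathrm{id}\colon(X,\eta)\to(X,\eta)$ and restrict to compacts), or reduce the clause for (3) to the clause for (2); as written, this step proves that $k(\tau)$ is the coarsest, not the finest, topology with the stated property.
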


\begin{proof}
(1)	Since $\tau \leq k(\tau)$, every $k(\tau)$-compact is $\tau$-compact.\\
For the converse, fix a $\tau$-compact subset  $L \subset X$. In order to show that $L$ remains $k(\tau)$-compact, take a cover $\mathcal U \subset k(\tau)$ of $L$.  By the definition of the topology $k(\tau)$, for every $U \in \mathcal U$ the intersection $U\cap L$ is open in $L$. Since $L$ is compact, the cover $\{U \cap L: U \in \mathcal U \}$ has a finite subcover, and so does $\mathcal U$.

     (2) follows from  (1) and  the definition of $k(\tau)$.
     
     Finally (3) follows  from
 the equality $f^{-1}(D) \cap K = (f_{|K})^{-1}(D)$ for any $D \subset Y$ and $K \subset X$ and  the definition of $k(\tau)$.
 
 It is straightforward to prove that $k(\tau)$ is the finest topology on $X$ satisfying (2) or (3).

In order  to prove the last assertion, assume that $\mu$ is a topology on $X$ which gives rise to the same  compact subsets as  $\tau$.
 If $C \subset X$ is $\mu $-closed, for every $K \subset X$ compact,  $C \cap K$ is $\mu$-compact, and by the assumption, it is also  $\tau$-compact. Since $(X, \tau)$ is Hausdorff, $C \cap K$ is    $\tau$-closed. This holds for any $\tau$-compact subset $K$, therefore  $C$ is $k(\tau)$-closed and  $\mu \leq  k(\tau)$. \\
  The requirement  on  the space $X$ to be   Hausdorff is used when claiming that every  compact subset is  closed. 
 	\end{proof}

  Two topologies $\tau_1$ and $\tau_2$ on a set $X$  for which the families of compact subsets of $X$ coincide 
  may not induce the same topology on a compact subset of both, as the following example shows.

  {\bf Example}. Let $X = \{1/n, n \in \N \} \cup \{0 \}$, let $ \tau_1$ be the topology on $X$ induced by the Euclidean of $\R$   and  $\tau_2$ the topology whose open sets are all the subsets of $X$ that do not contain $\{0 \}$, together with the total set $X$. Clearly $\tau_1$ and $\tau_2$ have the same family of compact subsets: namely, every finite $F \subset X$, and every subset that contains $\{0 \}$.  However they do not induce the same topology on the compact subset $ \{1/2n, n\in \N \} \cup \{0 \}$. 

   On the other way round, if the assumption is  that both topologies induce the same topology on the compact subsets of one of them, say $\tau_1$, then   $\mathcal K_1 \subset \mathcal K_2$, where $ \mathcal K_i$ are the respective families of compact subsets.


  \begin{lem}\label{cptosiguales}
   Let   $\mathcal F = \{ \tau_i, i \in I \}$  be a family of  Hausdorff topologies on a set   $X$ which give rise to the same compact subsets. The elements of $ \mathcal F$  induce the same topology on the common  compact subsets of $X$. If $\tau_1 $ is  the supremum of $\mathcal F$, $\tau_1$  has also the same family of compact subsets  and $(\tau_1)_{|K} = (\tau_i)_{|K} $ for every  compact $ K \subset X$.
  \end{lem}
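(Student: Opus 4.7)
My plan is to split the claim into two parts. First, I will show that two Hausdorff topologies on $X$ sharing the same family of compact subsets must induce the same topology on each such compact subset; second, I will deduce from this that the supremum $\tau_s := \sup \mathcal{F}$ (written $\tau_1$ in the statement) satisfies all the asserted properties.

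For the first part, fix $\tau_i, \tau_j \in \mathcal{F}$ and any $K \subset X$ which is compact in both (equivalently, in either, by hypothesis). Given a subset $C \subseteq K$ that is closed in $(\tau_i)_{|K}$, the fact that $(K, (\tau_i)_{|K})$ is compact Hausdorff implies that $C$ is $\tau_i$-compact; by hypothesis $C$ is then $\tau_j$-compact, and since $\tau_j$ is Hausdorff, $C$ is $\tau_j$-closed, hence $(\tau_j)_{|K}$-closed. This yields $(\tau_j)_{|K} \geq (\tau_i)_{|K}$, and by symmetry the two agree. Denote this common topology by $\rho_K$.

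For the second part, since $\tau_s \geq \tau_i$ for every $i$, every $\tau_s$-compact subset is already $\tau_i$-compact. For the converse, and simultaneously for the identity $(\tau_s)_{|K} = (\tau_i)_{|K}$, I would use the subbasis $\bigcup_i \tau_i$ of $\tau_s$ to conclude that restriction to $K$ commutes with taking suprema of topologies, i.e.
\[
(\tau_s)_{|K} \;=\; \sup_{i \in I} (\tau_i)_{|K}.
\]
Whenever $K$ is compact in some (hence all) $\tau_i$, the right-hand side equals $\rho_K$ by the first part, so $(K,(\tau_s)_{|K})=(K,\rho_K)$ is compact and $K$ is $\tau_s$-compact.

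The main point — really the only delicate one — is the interchange of restriction with supremum of topologies, which is immediate from the subbasis description but deserves an explicit line. The Hausdorffness of the $\tau_i$ enters precisely at the step where compact sets are promoted to closed sets in the first part; it is also what makes $\tau_s$ Hausdorff, so that $(K,(\tau_s)_{|K})$ being compact indeed certifies $K$ as a $\tau_s$-compact subset of $X$.
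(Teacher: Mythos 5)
Your argument is correct, and its first half is exactly the paper's: on a common compact $K$ the closed subsets of $(K,(\tau_i)_{|K})$ are precisely the $\tau_i$-compact subsets of $K$ (closed-in-compact gives compact; Hausdorff gives compact implies closed), and since the compact subsets are the same for all $i$, the induced topologies coincide. Where you diverge is the second half. The paper proves that a $\tau_i$-compact $K$ stays compact in the supremum by a net argument: extract a subnet converging in (all) the $\tau_i$ and check convergence in $\tau_1$ on basic neighborhoods $\bigcap_{m=1}^n V_{i_m}$. You instead invoke the identity $\bigl(\sup_i \tau_i\bigr)_{|K} = \sup_i \bigl((\tau_i)_{|K}\bigr)$, valid for an arbitrary subset $K$ because restriction of a topology generated by a subbasis is generated by the restricted subbasis; combined with the first half this gives $(\tau_1)_{|K}=\rho_K$ at once. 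Your route is cleaner and buys something the paper's proof leaves implicit: the asserted equality $(\tau_1)_{|K}=(\tau_i)_{|K}$ comes out directly, whereas the paper only establishes compactness and tacitly relies on re-applying the first assertion to the enlarged family $\mathcal F\cup\{\tau_1\}$. One small remark: your final sentence overstates the role of Hausdorffness of $\tau_1$ --- compactness of $(K,(\tau_1)_{|K})$ certifies $K$ as a $\tau_1$-compact subset of $X$ with no separation hypothesis; Hausdorffness is needed only in the first half (and, if one wishes, to note that $\tau_1$ is itself Hausdorff). This does not affect the validity of the proof.
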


   \begin{proof}
    In order to prove the first assertion,	fix  a  $\tau_i$-compact subset  $K$. Since $\tau_i$  is Hausdorff, any  $C\subset K$ is $\tau_i$-closed if and only if it is $\tau_i$-compact.  
   	Thus, by the assumption,  the $\tau_i$-closed subsets of $K$ are the same for every $i \in I$. \\
   	Let us see now that  every  $\tau_i$-compact is also   $\tau_1$-compact. As above, let $K \subset X$  be  $\tau_i$-compact. Pick a net
   	 $S: = \{x_j, j \in J \}$  in  $K$. It has a $\tau_i$-convergent subnet, thus     without loss of generality we can assume   that there exists $ x \in K$ such that 	 $x_j  \stackrel{\tau_i}{\longrightarrow} x$, for every $i \in I$.\\
   	  Let us prove that $S$  also converges to $x$ in $\tau_1$.  A basic  $\tau_1$-neighborhood   of $x$ has the form   $V = \cap_{m=1}^n V_{i_m }$, where $V_{i_m}$ is a neighborhood of $x$ in $\tau_{i_m}$. Since   	 $x_j  \stackrel{\tau_{i_m}}{\longrightarrow},  x$,  for $m= 1, \dots, n$,  the net is eventually in $V_{i_1}, \dots, V_{i_n}$. Now  $J$ is a directed set, therefore  the net is also eventually in $V = \cap_{m=1}^n V_{i_m }$. As $V$ was a basic arbitrary $\tau_1$-neighborhood of $x$,  it follows that  $x_j  \stackrel{\tau_1}{\longrightarrow} x$.
   	  \end{proof}

 The $k$-refinement  of a group topology may not be a group topology (Example \ref{prod lineas}).
 Around the 70's Noble defined the $k$-groups, a  notion  weaker than that of a  $k$-space in the context of topological groups. The  $k_\T$-groups were defined in \cite{TM} in the context of abelian topological groups. 
 For the reader's convenience we state both definitions:

 \begin{defn}
 {\em	A topological group $(G, \tau)$ is} a $ k$-group {\em if for every topological group   $(H, \mu)$  and every homomorphism $f: G \to H$ the following holds:
 		
 If	$f_{|K}$ is continuous for any compact  $K \subset G$, then $f$ is continuous.}
 \end{defn}
  For further use we  write the following  property, whose proof can be seen in \cite[1.1]{noble}:
  \begin{lem}\label{k-group}
  A  topological group  $(G, \tau)$ is a $k$-group iff $\tau$ is the finest {\bf group}  topology among all  those  that induce the same  as $\tau$ on the $\tau$-compact subsets.	
  \end{lem}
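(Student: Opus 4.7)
The plan is to prove the two implications separately, using an auxiliary supremum of group topologies for the harder direction.

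For the forward implication, suppose $(G,\tau)$ is a $k$-group and let $\mu$ be any group topology on $G$ such that $\mu_{|K} = \tau_{|K}$ for every $\tau$-compact $K$. I would apply the $k$-group property to the identity homomorphism $\mathrm{id}_G : (G,\tau) \to (G,\mu)$. For each $\tau$-compact $K$, the restriction $(\mathrm{id}_G)_{|K} : (K,\tau_{|K}) \to (G,\mu)$ is continuous because any $\mu$-open set $V$ satisfies $V \cap K \in \mu_{|K} = \tau_{|K}$. Hence $\mathrm{id}_G$ is $\tau$-to-$\mu$ continuous, so $\mu \leq \tau$, and $\tau$ is the finest such group topology.

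For the converse, assume $\tau$ is the finest group topology inducing on each $\tau$-compact subset the topology $\tau_{|K}$. Fix a homomorphism $f:(G,\tau) \to (H,\mu)$ whose restriction $f_{|K}$ is continuous for every $\tau$-compact $K\subset G$. Let $\tau'$ denote the coarsest group topology on $G$ making $f$ continuous; a zero-neighborhood basis of $\tau'$ consists of the sets $f^{-1}(U)$ where $U$ runs through a zero-neighborhood basis of $(H,\mu)$. This is a well-defined group topology since $f$ is a homomorphism. Form the supremum $\tau \vee \tau'$, which is again a group topology. The key step will be to verify that $(\tau \vee \tau')_{|K} = \tau_{|K}$ for every $\tau$-compact $K$: the continuity of $f_{|K}$ on $(K,\tau_{|K})$ gives $\tau'_{|K} \leq \tau_{|K}$, and a subbase computation then yields $(\tau \vee \tau')_{|K} = \tau_{|K} \vee \tau'_{|K} = \tau_{|K}$. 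By the maximality assumption on $\tau$, this forces $\tau \vee \tau' \leq \tau$, hence $\tau' \leq \tau$, which is precisely the continuity of $f:(G,\tau) \to (H,\mu)$.

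The main technical point I expect to be slightly delicate is the identity $(\tau \vee \tau')_{|K} = \tau_{|K}$: restrictions of suprema of topologies do not in general coincide with suprema of restrictions. Here it works because $\tau \cup \tau'$ is a subbase for $\tau \vee \tau'$, and after intersecting with $K$ every element of $\tau'$ becomes $\tau_{|K}$-open by the hypothesis on $f_{|K}$. Beyond this I am silently using two standard facts that should be acknowledged: the supremum of two group topologies is a group topology, and the pullback of a group topology along a group homomorphism exists and is the coarsest group topology making the homomorphism continuous.
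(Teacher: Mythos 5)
Your proof is correct, and it is the standard argument for this fact: the forward direction by applying the $k$-group property to the identity homomorphism $\mathrm{id}\colon (G,\tau)\to (G,\mu)$, and the converse by pulling back $\mu$ along $f$, joining with $\tau$, checking agreement on $\tau$-compact sets, and invoking maximality. The paper itself does not prove the lemma but refers to Noble's \cite[1.1]{noble}, and your argument is essentially the expected one. One minor remark: the step you flag as delicate is in fact automatic --- for any family of topologies, the restriction of the supremum to a subset always coincides with the supremum of the restrictions, precisely by the subbase computation you carry out, so your cautionary general claim is false but harmless; the two standard facts you invoke (the supremum of two group topologies is a group topology, and the initial topology along a homomorphism is a group topology) are indeed routine.
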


 To  each  topological group 
  there corresponds  a $k$-group structure,  defined as follows:

 Where  $(G, \tau)$ is a Hausdorff topological group, let $k_g(\tau)$ be the   finest  of all the group topologies on $G$ that coincide with $\tau$  on every $\tau$-compact subset $K \subset G$.
 Clearly $(G, k_g(\tau))$ is a $k$-group, which might be called the {\it  $k$-group modification of} $(G, \tau)$. Also the topology $k_g(\tau)$ is called the $k$-group modification of $\tau$.

 Observe that if  a topological group $(G, \tau)$ is a $k$-space, it is in particular a $k$-group. However there are $k$-groups which are not $k$-spaces as shown in  Example  \ref{prod lineas}

 \begin{defn}
 {\em	A topological group is} a $k_\T$-group {\em if every character $f: G \to \T$ such that its restriction $f_{|K}$  to any  compact $K \subset G$ is continuous, must be continuous.}
 \end{defn}

 Every $k$-group is a $k_\T$-group, as a consequence of their definitions. The converse does not hold as shown in (v) of  the following example, 
 partially stated in  \cite[6.1.3]{TM}.

\begin{ex}\label{Ex Banach}
	{\bf A $k_\T$-group which is not a $k$-group.}\\
	{\em  Let $E$ be an infinite-dimensional  Banach space.
		 Denote by $\sigma(E, E^*)$ and  $\sigma (E, E^\wedge)$  the weak topologies on $E$ corresponding respectively to the space $E^*$ of continuous linear forms,   and  to the group $E^\wedge$ of continuous characters.}
	\begin{itemize}
		\item[(i)] 
 $\sigma (E, E^\wedge) < \sigma(E, E^*) $. 	{\em  The inequality is strict, but both topologies  give rise to the same character group, that is:}
		$$	(E, \sigma (E, E^*))^\wedge =  (E, \sigma (E, E^\wedge))^\wedge =  E^\wedge $$ 
			\item[(ii)] {\em    $ (E, \sigma (E, E^*)) $ is a $k_\T$-group.}
	
			\item[ (iii)] {\em  $\sigma(E, E^*)$ and   $\sigma(E, E^\wedge)$ give rise to the same family of compact subsets of $E$.} 
			\item[(iv)]  $(E, \sigma(E, E^\wedge)) $ {\em is also a $k_\T$-group.}
			\item[(v)]  {\em If $ E: = l_1$ is  the classical Banach  space, $G: = (l_1, \sigma (l_1, l_{\infty})) $   is  a  $k_\T$- group, which is not a $k$-group.}
	\end{itemize}
\end{ex}

\begin{proof} (i) By \cite[Lemma 1]{smith} the exponential mapping from  $E^*$ to $E^\wedge$ is a group isomorphism. Further, the same  statement  holds for any locally convex space $X$, with similar meaning for $X^*$ and $X^\wedge$. In particular, if $X: = (E, \sigma(E, E^*))$, the continuous linear forms on $X$ are  the elements of $E^*$. Thus, the continuous characters on $X$ have the form $\rho \phi$, where $\rho: \R \to \T$ is defined by $t \mapsto e^{2\pi it}$ and $\phi \in E^*$. This proves that $	(E, \sigma (E, E^*))^\wedge = E^\wedge$ and therefore $\sigma (E, E^\wedge)$ is the  Bohr topology on $E$ associated  to $ \sigma (E, E^*) $. Consequently, $	(E, \sigma (E, E^*))^\wedge =  (E, \sigma (E, E^\wedge))^\wedge$. \\ 
	   It also holds  $\sigma (E, E^\wedge) < \sigma(E, E^*) $, and they are distinct since the first one is precompact and the second one is a vector topology. \\
	   	(ii) Let  $f: X \to \T$ be a group homomorphism such that   $f_{|K} $ is continuous for any  $\sigma (E, E^*)$-compact $K  \subset E $. If $\tau$ denotes the norm topology on $E$, from $\sigma(E, E^*) < \tau$ we obtain that $f_{|L} $ is $\tau$-continuous for every $L \subset E$ which is  $\tau$-compact.   Since  $(E, \tau)$   is a $k$-space,
	   	  $f: (E, \tau) \to \T $ is continuous, that is $f \in E^\wedge$. From (i) we obtain that $f \in (E, \sigma (E, E^*))^\wedge$, 
	   	   and therefore $X$ is a  $k_\T$-group. \\
	(iii) is is proved in  \cite{RT}.\\
	 (iv) is a consequence of (i), (ii), (iii) and the first part of Lemma \ref{cptosiguales}.\\
	 (v) By (ii), $G$ is a $k_\T$-group. Taking into account that  the $\sigma (l_1, l_{\infty})$-compact subsets  of $l_1$ coincide with the norm-compact subsets, and that $l_1$ is a $k$-space, we obtain that the norm topology is the $k$-refinement (clearly, also the $k$-group modification) of $\sigma (l_1, l_{\infty})$. By Lemmas \ref{k-group} and \ref{cptosiguales},  $G =(l_1, \sigma (l_1, l_{\infty}))$ is not a $k$-group.
	 \end{proof}

From Example \ref{Ex Banach}, it is clear  that for  a topological group $(G, \tau)$ there might exist several compatible $k_\T$-group topologies on $G$ with the same compact subsets as $\tau$.  On the other hand,  there is a unique $k$-group topology whose compact subsets are precisely  the $\tau$-compact subsets, namely $k_g(\tau)$.   We will see below that the $k$-group modification   $k_g(\tau)$ is compatible with $\tau$ if and only if $(G, \tau^+)$ is a $k_\T$-group.

For two group topologies on a group $G$,  the property of ``giving rise to the same compact subsets" is in some sense complementary to that of  ``being compatible", as expressed in the following proposition whose easy proof we omit.

\begin{pro}\label{contraste}
	Let $G$ be an abelian group and let $\tau_1, ~\tau_2$ be group topologies on $G$ such that $\tau_1 \leq \tau_2$.
	 \begin{itemize}
	 	\item[(1)] Assume $\tau_1$ and $\tau_2$ are compatible topologies.\\ If $(G, \tau_2)$ is a $k_\T$-group, then $(G, \tau_1)$ is also a $k_\T$-group.
	 	\item[(2)] Assume $\tau_1$ and $\tau_2$  give rise to the same family of compact subsets and both are Hausdorff. \\If $(G, \tau_1 )$ is a $k_\T$-group, then also
	 	$(G, \tau_2)$ is a $k_\T$-group.
	 \end{itemize}
 In particular, if there is a $k_\T$-group topology in $G$, compatible with  a group  duality $(G, H)$, then $(G, \sigma(G, H))$ is a $k_\T$-group.
\end{pro}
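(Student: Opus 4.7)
The plan is to reduce both parts to the basic definition of a $k_\T$-group by transferring continuity of the restrictions $f_{|K}$ between the two topologies. Throughout, I will assume the (essentially implicit) Hausdorffness needed so that the notion of $k_\T$-group is non-degenerate.

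For part (1), start with an arbitrary character $f:G\to\T$ whose restriction to every $\tau_1$-compact subset is $\tau_1$-continuous. The goal is to deduce that $f$ is $\tau_1$-continuous. Fix any $\tau_2$-compact subset $L\subseteq G$. Because $\tau_1\leq\tau_2$, the identity map $(L,\tau_2)\to(L,\tau_1)$ is continuous and $L$ is also $\tau_1$-compact. By hypothesis, $f_{|L}$ is $\tau_1$-continuous, hence the composition with the identity gives that $f_{|L}$ is $\tau_2$-continuous. Since $(G,\tau_2)$ is a $k_\T$-group, $f\in(G,\tau_2)^\wedge$; using compatibility, $f\in(G,\tau_1)^\wedge$, which is exactly what we wanted.

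For part (2), take a character $f:G\to\T$ whose restriction to every $\tau_2$-compact subset is $\tau_2$-continuous. Fix a $\tau_1$-compact subset $L$. By the hypothesis that $\tau_1$ and $\tau_2$ produce the same compact subsets, $L$ is also $\tau_2$-compact, and Lemma \ref{cptosiguales} applied to the two-element family $\{\tau_1,\tau_2\}$ (both Hausdorff, same compacts) guarantees $(\tau_1)_{|L}=(\tau_2)_{|L}$. Consequently $f_{|L}$, which is $\tau_2$-continuous on $L$ by assumption, is also $\tau_1$-continuous. Since $(G,\tau_1)$ is a $k_\T$-group, $f$ is $\tau_1$-continuous, and thus $\tau_2$-continuous because $\tau_1\leq\tau_2$.

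The final assertion is then immediate from (1): if $\tau$ is a $k_\T$-group topology on $G$ with $(G,\tau)^\wedge=H$, take $\tau_1:=\sigma(G,H)=\tau^+$ and $\tau_2:=\tau$. These satisfy $\tau_1\leq\tau_2$ and are compatible by the definition of the Bohr topology, so (1) yields that $(G,\sigma(G,H))$ is a $k_\T$-group. I do not anticipate any real obstacle; the only subtle point is recognizing that in (2) the required coincidence of the induced topologies on compact subsets is precisely the content of Lemma \ref{cptosiguales}, whereas in (1) it is the inequality $\tau_1\leq\tau_2$ (and not equality of compacts) that does the work.
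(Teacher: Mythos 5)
Your proof is correct, and since the paper explicitly omits the proof of Proposition \ref{contraste} as ``easy'', your argument simply supplies the intended routine verification: part (1) via $\tau_1\leq\tau_2$ plus compatibility, part (2) via Lemma \ref{cptosiguales} (or, equivalently, the fact that a continuous bijection from a compact space onto a Hausdorff space is a homeomorphism), and the final claim by taking $\tau_1=\sigma(G,H)=\tau^+$, $\tau_2=\tau$. The only wording to adjust is in the last step: the compatibility of $\tau^+$ with $\tau$, i.e.\ $(G,\sigma(G,H))^\wedge=H$, is not a matter of definition but the Comfort--Ross theorem (cited in the paper as \cite[3.7]{CMT} and used freely, e.g.\ in Remark \ref{bohr}), so you should invoke that result rather than say ``by the definition of the Bohr topology''.
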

\begin{cor}
If a topological group $(G, \tau)$ has the Glicksberg property (i.e., the $\tau$-compact  and the $\tau^+$-compact subsets of $G$ coincide), then all the  group topologies on $G$ which lie between $\tau^+$ and $\tau$ are simultaneously $k_\T$-group topologies or else none of them is a $k_\T$-group topology.
\end{cor}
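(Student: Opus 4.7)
The plan is to pivot every topology in the interval $[\tau^+,\tau]$ against $\tau^+$ itself, using part (1) of Proposition \ref{contraste} to get down to $\tau^+$ and part (2) to climb back up. The Glicksberg hypothesis is exactly what makes part (2) available, since part (2) requires the two topologies involved to share the same compact subsets.

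First I would make two preliminary observations about an arbitrary group topology $\nu$ with $\tau^+\leq \nu\leq \tau$. Concerning dual groups, the inclusions $(G,\tau^+)^\wedge\subseteq (G,\nu)^\wedge\subseteq (G,\tau)^\wedge$ (immediate from the order on the topologies) sandwich $(G,\nu)^\wedge$ between two copies of $G^\wedge$, so $\nu$ is compatible with both $\tau^+$ and $\tau$. Concerning compact subsets, $\tau^+\leq \nu$ makes every $\nu$-compact set $\tau^+$-compact, Glicksberg makes every $\tau^+$-compact set $\tau$-compact, and $\nu\leq\tau$ makes every $\tau$-compact set $\nu$-compact; thus $\tau^+$, $\nu$ and $\tau$ share the same family of compact sets, and all three are Hausdorff.

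With these in hand I would then prove the equivalence
\[
(G,\nu)\text{ is a }k_\T\text{-group}\ \Longleftrightarrow\ (G,\tau^+)\text{ is a }k_\T\text{-group}
\]
for every $\nu\in[\tau^+,\tau]$. The forward direction is Proposition \ref{contraste}(1) applied to the compatible pair $\tau^+\leq\nu$. The reverse direction is Proposition \ref{contraste}(2) applied to the Hausdorff pair $\tau^+\leq\nu$ with common family of compact subsets (established above). Since the right-hand side of the equivalence does not depend on $\nu$, the $k_\T$-group property is either shared by every topology in $[\tau^+,\tau]$ or by none, which is the dichotomy asserted.

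I do not foresee any genuine obstacle: the corollary is a direct coupling of the two halves of Proposition \ref{contraste}, and the Glicksberg property is precisely the hypothesis required to feed part (2) with the ``same compact subsets'' assumption.
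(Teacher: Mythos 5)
Your argument is correct and is exactly the route the paper intends: the corollary is left as an immediate consequence of Proposition \ref{contraste}, with part (1) applied to the compatible pair $\tau^+\leq\nu$ (compatibility coming from the sandwich $(G,\tau^+)^\wedge\subseteq(G,\nu)^\wedge\subseteq(G,\tau)^\wedge$ with both ends equal to $G^\wedge$) and part (2) applied after the Glicksberg hypothesis forces $\tau^+$, $\nu$ and $\tau$ to share the same compact subsets. The only tacit point, which you assert rather than justify but which the paper shares, is the Hausdorffness of $\tau^+$ (equivalently, that $(G,\tau)$ is MAP), needed to invoke part (2).
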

Next, we present an example of a $k$-group which is not a $k$-space. Its $k$-refinement is not even a topological  group.
 \begin{ex}\label{prod lineas}
	{\bf A $k$-group whose  $k$-refinement is not a topological group.}

 {\bf Claim 1.} The product $\R^\R$ of $\cont$ real lines is not a $k$-space. Nevertheless, it  is a $k$-group.
	
\end{ex}

\begin{proof}
The proof of the first claim can be developed through the following  hint provided in Kelley's book \cite{kelley}.	Let $A \subset \R^\R$ be defined by:
	
	$	x=(x_r)_{r \in \R} \in A \Leftrightarrow $ there exists $m \in \Z$ and $F \subset \R$ with $|F| \leq m$ and $m \geq 1$,  such that:
	\begin{equation*}
	\left\{
	\begin{array}{ll}
	1) ~~  x_r = m, ~ ~\forall r \in \R \setminus F\\
	
	2) ~~ x_r =0 , ~ ~{\rm if~} ~ r\in F  ~ 
	
	\end{array}
	\right.
	\end{equation*}

	It is easy to prove that $\overline{A} = A\cup\{0\}$, therefore $A$ is not closed. However, for every compact $K \subset \R^\R$, $A\cap K$ is closed in $K$. Thus $\R^\R$  is not a $k$-space.
	
	The fact that it is a $k$-group follows from \cite[Theorem 5.7]{N2}, where it is proved that the product of $k$-groups is a $k$-group. Obviously, each factor $\R$ is a $k$-space, therefore also a $k$-group.
\end{proof}

{\bf Claim 2.} {\em	Denote by $\pi$ the product topology on $\R^\R$, and let $k(\pi)$  be its $k$-refinement. Then $(\R^\R, k(\pi))$ is not a topological group.}

\begin{proof}
	As pointed out in Claim 1,
$\R^\R$ endowed with the product topology $\pi$ is not a $k$-space. If its $k$-refinement $k(\pi)$ were a group   topology,  $(	\R^\R, k(\pi))$  would be a $k$-group, with the same compact subsets as 	$(\R^\R, \pi)$. Since $(\R^\R, \pi)$ is already a  $k$-group,
		 it must be   $k(\pi) = \pi$. This contradicts the fact that $\pi$ is not a $k$-space topology and  therefore $\pi \neq k(\pi)$.
\end{proof}
\section{The  $k_\T$-extension of a precompact  topology}   \label{kT-ext}
   The  goal of this section is to determine conditions under which  the existence of   $k$-group topologies in a fixed  duality $(G, G^\wedge)$ can be guaranteed. The  $k_\T$-groups appear in this context  because of the following result:
   \begin{lem}\cite[6.1.5]{TM}\label{montse}
   	A topological group $(G, \tau)$ is a $k_\T$-group if and only if $k_g(\tau)$ is compatible with $\tau$, where $k_g(\tau)$ is  the $k$-group modification of $\tau$.
   \end{lem}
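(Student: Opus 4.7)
The plan is to prove each direction by directly pushing characters between the two topologies $\tau$ and $k_g(\tau)$, using the fact (immediate from the definition of $k_g(\tau)$) that these two topologies give rise to the same compact subsets and induce the same topology on each of them. I would first record this fact explicitly: since $\tau\leq k_g(\tau)$, every $k_g(\tau)$-compact subset is $\tau$-compact, and conversely if $K$ is $\tau$-compact, then $\tau|_K = k_g(\tau)|_K$ by construction of $k_g(\tau)$, so $K$ is $k_g(\tau)$-compact as well. In particular, a character $f: G\to\T$ satisfies ``$f|_K$ is $\tau$-continuous for every $\tau$-compact $K$'' if and only if ``$f|_K$ is $k_g(\tau)$-continuous for every $k_g(\tau)$-compact $K$''.

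For the implication $(\Rightarrow)$, assume $(G,\tau)$ is a $k_\T$-group. The inclusion $(G,\tau)^\wedge \subseteq (G,k_g(\tau))^\wedge$ is automatic since $\tau \leq k_g(\tau)$. For the reverse inclusion, take $f \in (G,k_g(\tau))^\wedge$; then $f|_K$ is $k_g(\tau)$-continuous on every $\tau$-compact $K$, hence $\tau$-continuous on $K$ by the preliminary observation. The hypothesis that $(G,\tau)$ is a $k_\T$-group then forces $f$ to be $\tau$-continuous, so $f \in (G,\tau)^\wedge$.

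For the implication $(\Leftarrow)$, assume $(G,k_g(\tau))^\wedge = (G,\tau)^\wedge$. Let $f : G\to\T$ be a character whose restriction to each $\tau$-compact subset is continuous. By the preliminary observation, $f|_K$ is $k_g(\tau)$-continuous on each $k_g(\tau)$-compact $K$. Now $(G,k_g(\tau))$ is a $k$-group by Lemma \ref{k-group} (it is the finest group topology agreeing with $\tau$ on $\tau$-compacts, and it has the same compact subsets with the same induced topology, so it is its own $k$-group modification), and every $k$-group is a $k_\T$-group. Consequently $f$ is $k_g(\tau)$-continuous, and the compatibility assumption gives $f \in (G,k_g(\tau))^\wedge = (G,\tau)^\wedge$, i.e.\ $f$ is $\tau$-continuous. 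Therefore $(G,\tau)$ is a $k_\T$-group.

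I do not anticipate a serious obstacle: the argument is essentially a bookkeeping exercise once one notices that $\tau$ and $k_g(\tau)$ share compact sets and induced topologies, so that the ``continuous on each compact'' hypothesis transfers freely between them. The only conceptual point to be careful about is citing correctly that $(G,k_g(\tau))$ is itself a $k$-group (hence a $k_\T$-group), which is what allows the $(\Leftarrow)$ direction to close without further work.
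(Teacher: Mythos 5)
Your proof is correct and follows essentially the same route as the paper: the forward direction via the shared compact sets and induced topologies plus the $k_\T$-property of $\tau$, and the converse via the fact that $(G,k_g(\tau))$ is a $k$-group (hence characters continuous on compacts are $k_g(\tau)$-continuous) together with compatibility. Your version merely makes explicit the appeal to Lemma \ref{k-group}, which the paper compresses into ``by the definition of $k_g(\tau)$''.
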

\begin{proof}  
	Let $(G, \tau)$ be a  a $k_\T$-group. Then $\tau \leq k_g(\tau) $ implies  $(G, \tau)^\wedge \subset (G, k_g(\tau))^\wedge$.\\
	For	the other inclusion, let $f: G \to \T$ be in $(G, k_g(\tau))^\wedge$.  From the fact that $\tau \le k_g(\tau) \le  k(\tau) $ and Proposition \ref{kspaces} it follows that $\tau$ and $k_g(\tau) $ admit the same family of compact sets and induce the same topology on them. Thus, $f_{|K}$ is $\tau$-continuous for every $\tau$-compact $K \subset G$.   Since $(G, \tau)$  is a $k_\T$-group,   $f$ is $\tau$-continuous and $f \in (G, \tau)^\wedge $.\\
	Conversely, assume $(G, \tau)^\wedge = (G, k_g(\tau))^\wedge$. Let $f: G \to \T$ be a homomorphism such that 
$	f_{|K} $ is $\tau_{|K}$-continuous for every compact $K \subset G$. By the definition of $k_g(\tau)$, $f$ must be $k_g(\tau)$-continuous. 
Thus,  $ f \in (G, k_g(\tau))^\wedge = (G, \tau)^\wedge$ and $(G, \tau)$ is a $k_\T$-group.
	\end{proof}
\begin{rem} \label{bohr}
 {\em If a topological group $(G, \tau)$ is a $k_\T$-group, then also $(G, \tau^+)$ is a $k_\T$-group. This derives from (1) of Proposition \ref{contraste},  and the fact that $\tau^+ $ is always compatible with $\tau$.} 

\end{rem}
   
  Thus, by the previous remark, the bottom topology $\sigma(G, G^\wedge)$ determines the existence of $k_\T$-topologies in a fixed duality $ (G, G^\wedge)$.    
   Let us see next that it also determines the existence of $k$-group topologies:

   \begin{thm}\label{hay}
   	Let $(G, \tau)$ be a topological group, and $G^\wedge$ its character group. The following statements are equivalent:
   	\begin{itemize}
   		\item [(1)]
   		There is at least one $k$-group topology on $G$ compatible with  the duality $(G, G^\wedge)$.
   		\item[(2)]
   		 $\tau^+ $ is a $k_{\T}$-group topology.
   	\end{itemize} 	
   \end{thm}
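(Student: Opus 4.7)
The plan is to prove the equivalence by short direct implications, using the machinery assembled earlier in the paper: Lemma \ref{montse} (the characterization of $k_\T$-groups via $k_g(\tau)$), Remark \ref{bohr} (passage to the Bohr topology preserves the $k_\T$ property), the observation in the paragraph after the definition of $k_\T$-group that every $k$-group is a $k_\T$-group, and the elementary fact that $\mu^+=\tau^+$ whenever $\mu$ and $\tau$ are compatible (both equal $\sigma(G,G^\wedge)$).

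For the implication $(1)\Rightarrow(2)$, I would start from a $k$-group topology $\mu$ on $G$ with $(G,\mu)^\wedge=G^\wedge$. Since every $k$-group is a $k_\T$-group, $(G,\mu)$ is a $k_\T$-group. Applying Remark \ref{bohr} to $(G,\mu)$ yields that $(G,\mu^+)$ is a $k_\T$-group. Because $\mu$ is compatible with $\tau$, the associated Bohr topologies coincide: $\mu^+=\sigma(G,G^\wedge)=\tau^+$. Therefore $\tau^+$ is a $k_\T$-group topology.

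For the converse $(2)\Rightarrow(1)$, assume $\tau^+$ is a $k_\T$-group topology. By Lemma \ref{montse} applied to $(G,\tau^+)$, the $k$-group modification $k_g(\tau^+)$ is compatible with $\tau^+$, so
\[
(G,k_g(\tau^+))^\wedge=(G,\tau^+)^\wedge=G^\wedge.
\]
On the other hand, $k_g(\tau^+)$ is by its very definition a $k$-group topology on $G$. Hence $k_g(\tau^+)$ is a $k$-group topology compatible with the duality $(G,G^\wedge)$, which establishes (1).

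I do not anticipate any real obstacle: both directions are essentially bookkeeping on top of Lemma \ref{montse} and Remark \ref{bohr}. The only subtle point worth spelling out carefully is that in $(1)\Rightarrow(2)$ one must not confuse the Bohr topology of $\mu$ with that of $\tau$; the step relies on the trivial but central remark that compatibility forces $\mu^+=\tau^+$, so that the $k_\T$ property transferred from $\mu$ to its Bohr topology is automatically the $k_\T$ property of $\tau^+$.
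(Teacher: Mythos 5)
Your proposal is correct and follows essentially the same route as the paper's own proof: for $(1)\Rightarrow(2)$ the paper also passes from the $k$-group topology $\mu$ to its $k_\T$ property, transfers it to $\mu^+$ via Proposition \ref{contraste}(1) (which is exactly the content of Remark \ref{bohr}), and uses $\mu^+=\tau^+$ by compatibility; for $(2)\Rightarrow(1)$ it likewise applies Lemma \ref{montse} to $(G,\tau^+)$ to exhibit $k_g(\tau^+)$ as a compatible $k$-group topology. No gaps.
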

\begin{proof}
	
(1) $\Rightarrow$ (2) Assume that  $\mu$ is a $k$-group topology in the duality $(G, G^\wedge)$. This means that $(G, \mu)^\wedge = G^\wedge$. In particular, $ \mu$ is a $k_\T$-group  topology and by (1) of Proposition \ref{contraste}, $\mu^+$ is also a  $k_\T$-group  topology. Since $\mu$ is compatible with $\tau$,   $\tau^+ = \mu^+$ and  the assertion follows.\\
(2) $\Rightarrow$ (1)  Conversely, if $(G, \tau^+)$ is a $k_\T$-group, by Lemma \ref{montse}  we obtain that  $k_g(\tau^+)$ is a $k$-group topology  on $G$ compatible with $\tau^+$. In other words,  $k_g(\tau^+)$ is in the duality $(G, G^\wedge)$.
	\end{proof}
\begin{rems}
	
\begin{itemize}
	\item[(i)]	
{\em   Observe that in  a duality $(G, G^\wedge)$
	there might be  several compatible  $k$-group topologies.
	For instance,  in the duality $(c_0(\T), \Z^{(\N)})$ presented and studied in \cite{DiMT}, the maximum and the minimum of all the locally quasi-convex compatible topologies are both metrizable, thus both are compatible locally quasi-convex  $k$-group topologies.	
	\item [(ii)]
	There is at most one {\bf complete} metrizable locally quasi-convex topology on a group $G$ with a fixed dual group $G^\wedge$. As proved in \cite{CMT},  such a topology is the Mackey topology on $G$  for the corresponding  duality $(G, G^\wedge)$.

\item[(iii)]
 For a fixed duality $(G, G^\wedge)$ let $\mu$ be a compatible  topology on $G$, and let $\mathcal K_{\mu}$ be the set of $\mu$-compact subsets of $G$. Assign to   $\mathcal K_{\mu}$ the $k$-group topology it generates on $G$, say $k_g(\mu)$. Then, by Lemma \ref{montse},  $k_g(\mu)$ is compatible with the duality $(G, G^\wedge)$, if and only if   $(G, \mu)$ is a $k_\T$-group.
 Thus,  the set of $k$-group topologies compatible with the duality $(G, G^\wedge)$ is in 1-1 correspondence with the set of families  $\mathcal K{_\mu}$ for  $\mu$ a $k_\T$-group topology in the dual pair $(G, G^\wedge)$.}
\end{itemize}
\end{rems} 

 Let us denote by  $\mathcal P$ the family   of precompact topologies on an abelian abstract group $G$, and by $\mathcal P_1$   the subfamily   of those  $w \in  \mathcal P$ such that   $(G, w)$ is  a $k_\T$-group. As proved  in Theorem \ref{hay}, the elements  $w \in \mathcal P_2 := \mathcal P \setminus \mathcal P_1 $ produce dualities $(G, G^\wedge)$ which do not  contain $k$-group topologies. We next define  the $k_\T$-extension of     $w \in  \mathcal P_2$,  a sort of associated  precompact topology  which gives rise  to a new duality which contains $k$-group topologies.

{\bf Notation. }
Denote by $\mathcal M$  the family of $w$-compact subsets of  a precompact group  $(G, w)$. 
Let  $\mathcal{H} $ be the set of all the characters $ f \in Hom(G, \T)$ such that 
$f_{|K} ~$ is  $ w$-continuous, $ \forall K \in \mathcal{M}$

\begin{defn}\label{kT-def}
{\em Let $(G, w)$ be a   precompact group. 
 The weak topology  on  $G$ relative to $\mathcal{H}$,  henceforth  denoted     $\tau_{\mathcal H}$ will be called}  the  $k_{\T} $-extension of  $w$.  
\end{defn}

 Clearly $\tau_{\mathcal H}$ is a     precompact topology on $G$, and  
$(G, \tau_{\mathcal H})^\wedge = \mathcal H$ by  \cite[3.7]{CMT}.
Consequently,  $G^\wedge: = (G,w)^\wedge \subset \mathcal H$, and  $w \leq \tau_{\mathcal H}$. Observe that  the equality   $\tau_{\mathcal H} = w$  implies that   $(G, w)$ is already a $k_{\T} $-group.

{\bf Some  properties of the  $k_{\T} $-extension. }

\begin{pro}\label{propiedades}

Let $(G, w)$ be a precompact  group and let $\tau_{\mathcal H}$ be the
 $k_{\T} $-extension of $w$. Then,
\begin{itemize}
	\item[(1)]
	 $\tau_{\mathcal H}$ and $w$ give rise to the same family $\mathcal M$ of compact subsets of $G$. Further,  $w_{|K} = (\tau_{\mathcal H})_{|K} $, for all $K \in \mathcal K$.
	 
\item[(2)]
$\tau_{\mathcal H}$ is a  $k_\T$-group topology.
\item[(3)]
 $\tau_{\mathcal H}$ is the maximum    in the family of all precompact topologies on $G$ that coincide with $w$ in the $w$-compact subsets of $G$.
 	\item[(4)]
 	If $w$ is Hausdorff, $\tau_{\mathcal H}$ is the maximum in the family of all precompact  Hausdorff topologies on $G$ with the same compact subsets  as  $w$.
\end{itemize}
\end{pro}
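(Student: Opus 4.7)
The plan is to prove the four items in order, exploiting the fact that a precompact group topology on $G$ is completely determined by its character group (it coincides with the pointwise-convergence topology of its dual), together with Lemma \ref{cptosiguales}.

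For (1), since $w\le\tau_{\mathcal H}$, every $\tau_{\mathcal H}$-compact subset is $w$-compact. Conversely, fix $K\in\mathcal M$. The subspace topology $(\tau_{\mathcal H})_{|K}$ is generated by the restrictions to $K$ of the elements of $\mathcal H$; but by the very definition of $\mathcal H$, each such restriction is $w$-continuous on $K$. Hence $(\tau_{\mathcal H})_{|K}\le w_{|K}$, and together with $w\le\tau_{\mathcal H}$ this forces $(\tau_{\mathcal H})_{|K}=w_{|K}$. In particular $K$ is $\tau_{\mathcal H}$-compact, so the two families of compact subsets coincide.

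For (2), take a character $f\colon G\to\T$ whose restriction to every $\tau_{\mathcal H}$-compact subset is $\tau_{\mathcal H}$-continuous. By (1) these subsets are exactly the members of $\mathcal M$, and $\tau_{\mathcal H}$ and $w$ agree on each of them. Thus $f_{|K}$ is $w$-continuous for every $K\in\mathcal M$, which means $f\in\mathcal H=(G,\tau_{\mathcal H})^\wedge$. So $f$ is $\tau_{\mathcal H}$-continuous, and $(G,\tau_{\mathcal H})$ is a $k_\T$-group.

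For (3), observe first that $\tau_{\mathcal H}$ itself belongs to the family by (1). Let $\mu$ be any precompact topology on $G$ such that $\mu_{|K}=w_{|K}$ for every $K\in\mathcal M$. For each $f\in(G,\mu)^\wedge$ and each $K\in\mathcal M$ the restriction $f_{|K}$ is $\mu_{|K}$-continuous, hence $w$-continuous on $K$; therefore $f\in\mathcal H$. This proves $(G,\mu)^\wedge\subseteq\mathcal H=(G,\tau_{\mathcal H})^\wedge$. Since both topologies are precompact, they coincide respectively with $\sigma(G,(G,\mu)^\wedge)$ and $\sigma(G,\mathcal H)=\tau_{\mathcal H}$, and the inclusion of dual groups yields $\mu\le\tau_{\mathcal H}$.

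For (4), assume $w$ is Hausdorff and let $\mu$ be a precompact Hausdorff topology on $G$ with the same compact subsets as $w$. Applying Lemma \ref{cptosiguales} to the two-element family $\{w,\mu\}$, both topologies induce the same subspace topology on every member of $\mathcal M$, so $\mu_{|K}=w_{|K}$ for all $K\in\mathcal M$. By (3), $\mu\le\tau_{\mathcal H}$; and $\tau_{\mathcal H}$ itself is Hausdorff (being finer than the Hausdorff topology $w$) and has the same compacts as $w$ by (1), so it belongs to the family and is its maximum. The only mildly delicate step is the reduction in (4) to hypothesis (3); everything else is a direct unwinding of the definition of $\mathcal H$ together with the correspondence between precompact topologies and subgroups of $\mathrm{Hom}(G,\T)$.
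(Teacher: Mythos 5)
Your proof is correct and follows essentially the same route as the paper's: use $(G,\tau_{\mathcal H})^\wedge=\mathcal H$, show that $w$ and $\tau_{\mathcal H}$ agree on the $w$-compact sets, and deduce (2)--(4) from this together with Lemma \ref{cptosiguales} and the fact that a precompact topology is the weak topology of its dual. The only variation is in (1), where you replace the paper's net-convergence argument by the equally valid (and slightly cleaner) observation that $(\tau_{\mathcal H})_{|K}$ is the initial topology of the restrictions $f_{|K}$, $f\in\mathcal H$, which are $w_{|K}$-continuous by the definition of $\mathcal H$.
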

\begin{proof}
(1) From the fact $w \leq \tau_{\mathcal H}$, we only need to prove that a fixed $w$-compact subset $K \subset G$ is also 	$\tau_{\mathcal H}$-compact.  To this end, pick a net  $S: = \{x_i, i \in I \}$   with range in $K$. Since $K$ is $w$-compact, $S$ has a $w$-convergent subnet. Without loss of generality, assume directly that $x_i  \stackrel{w}{\longrightarrow}  x$.  For every $f \in \mathcal H$ it holds $ f(x_i)
{\longrightarrow} f(x)  $ in $\T$. Therefore, taking into account that  $ (G,\tau_{\mathcal H})$ is precompact with dual $\mathcal H$,   $x_i  \stackrel{\tau_{\mathcal H}}{\longrightarrow} x$. Thus, $K$ is also $\tau_{\mathcal H}$-compact and further induces on $K$ the same topology as $w$.

(2) This is obvious from (1) together with the fact that $(G, \tau_{\mathcal H})^\wedge = \mathcal H$.

(3) Assume now that $u$ is a precompact topology in $G$ such that $u_{|K} = w_{|K}$ for every $w$-compact  $K\subset G$. If $\mathcal L = (G, u)^\wedge$, every $f \in \mathcal L$ is  clearly in $\mathcal H$. Therefore,
  $ u \leq \tau_{\mathcal H}$.

  (4)   The assertion follows from  (3) and   Lemma \ref{cptosiguales}.

	\end{proof}

 For a topological group $(G, \tau)$ such that  $\tau_{\mathcal H} \neq \tau^+$, the duality  $(G, G^\wedge)$ does not admit any $k$-group topology, as seen in Theorem \ref{hay}. However,  $\tau_{\mathcal H} $ gives rise to  a sort of ``extended duality"  which improves this lack, as shown next.
\begin{thm} \label{picture}
 	Let  $(G, \tau)$  be a  topological group, and $\tau_{\mathcal H}$ the $k_\T$-extension of $\tau^+$. Then,  $\tau_{\mathcal H}$  is   a $k_\T$-group topology on $G$ compatible with the duality    $(G, \mathcal H) $. Further,  $\tau_{\mathcal H}$ and $\tau^+$ give rise to the same family of compact subsets of $G$.

\end{thm}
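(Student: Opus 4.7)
The plan is to obtain the three assertions of Theorem \ref{picture} as direct consequences of Proposition \ref{propiedades} applied to the precompact group topology $w:=\tau^+$, combined with the identity $(G,\tau_{\mathcal H})^\wedge=\mathcal H$ that was recorded right after Definition \ref{kT-def}.

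Specifically, since $\tau^+$ is a precompact group topology on $G$, Proposition \ref{propiedades} is available with $w=\tau^+$. The identity $(G,\tau_{\mathcal H})^\wedge=\mathcal H$ (a consequence of \cite[3.7]{CMT} applied to the weak topology on $G$ determined by the group $\mathcal H$ of characters) is precisely the statement that $\tau_{\mathcal H}$ lies in the duality $(G,\mathcal H)$. The assertion that $\tau_{\mathcal H}$ is a $k_\T$-group topology is Proposition \ref{propiedades}(2). The coincidence of the $\tau_{\mathcal H}$-compact and $\tau^+$-compact subsets of $G$ is Proposition \ref{propiedades}(1). Each of the three claims in the theorem is therefore assembled from one previously established item.

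In this sense Theorem \ref{picture} is a repackaging of what Proposition \ref{propiedades} and \cite[3.7]{CMT} already furnish, and no genuinely new difficulty arises at this stage. The substantive content sits in part (1) of Proposition \ref{propiedades}, whose net argument establishes that any $\tau^+$-compact set $K$ remains $\tau_{\mathcal H}$-compact: a net $\{x_i\}\subset K$ admits a $\tau^+$-convergent subnet $x_i\to x$, and for every $f\in\mathcal H$ the restriction $f_{|K}$ is $\tau^+$-continuous by the very definition of $\mathcal H$, so $f(x_i)\to f(x)$ in $\T$; since $\tau_{\mathcal H}$ is the weak topology induced on $G$ by $\mathcal H$, this yields $x_i\stackrel{\tau_{\mathcal H}}{\longrightarrow}x$. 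The only thing I would add in the write-up itself is a brief sentence pointing out that Proposition \ref{propiedades} does not require $w$ to be Hausdorff for parts (1) and (2), so the theorem is stated for an arbitrary topological group $(G,\tau)$ without any MAP hypothesis.
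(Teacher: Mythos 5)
Your proposal is correct and follows essentially the same route as the paper: the paper also deduces the $k_\T$-property from Proposition \ref{propiedades}(2), the compatibility with $(G,\mathcal H)$ from the identity $(G,\tau_{\mathcal H})^\wedge=\mathcal H$, and the coincidence of compact sets from Proposition \ref{propiedades}(1). The only difference is that the paper additionally invokes Lemma \ref{montse} to record that $k_g(\tau_{\mathcal H})$ is a $k$-group topology compatible with the new duality $(G,\mathcal H)$, a remark going slightly beyond the literal statement, which your argument rightly does not need.
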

\begin{proof}
	By (2)  of Proposition \ref{propiedades}, $\tau_{\mathcal H}$ is a
	$k_\T$-group topology.  Since 
	 $(G, \tau_{\mathcal H})^\wedge = \mathcal H$, according to  Lemma \ref{montse},  $k_g(\tau_{\mathcal H})$ is a $k$-group topology compatible with  the duality $(G, \mathcal H)$.  The last assertion is proved by (1) in Proposition \ref{propiedades}.
	\end{proof}

 The family $\mathcal P$ of precompact topologies on a fixed abelian group $G$ offers  the following picture:
\begin{itemize}
	\item  If  $(G, w)$ is a    $k_\T$-group (that is, $w \in \mathcal P_1$), there exist $k$-group topologies  on $G$ compatible with  the duality $(G, G^\wedge)$, where $G^\wedge = (G, w)^\wedge$. They  might be locally compact or metrizable or none of them, as shown by the example $\R^\R$  (\ref{prod lineas}).
	\item
	If $(G, w)$ is    not a $k_\T$-group (that is,  $w \in \mathcal P_2$), the   $k_\T$-extension of $w$, which we have called  $\tau_{\mathcal H}$,  gives rise to a {\bf new} duality $(G, \mathcal{H})$, with   $k$-group topologies. The latter are not compatible with $w$. In fact, if $\lambda$ is one of them,  $(G, \lambda)^\wedge = \mathcal H \neq (G, w)^\wedge $.
\end{itemize}

\begin{pro}\label{complecion}
	Let $(G, w)$ be a precompact group with  $w \in \mathcal P_2$, and let $\tau_{\mathcal H}$ be the $k_\T$- extension of $w$. Then, the Pontryagin dual of $(G, \tau_{\mathcal H})$ is complete and contains $(G, w)^\wedge$ as a topological subgroup.  However, $(G, \tau_{\mathcal H})^\wedge$ may not be  the completion of $(G, w)^\wedge$.
\end{pro}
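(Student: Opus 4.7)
The plan is to handle the three assertions in sequence. For completeness of $(G,\tau_{\mathcal H})^\wedge$, first observe that this dual coincides as a topological group with $\mathcal H$ equipped with the compact-open topology, and that by Proposition~\ref{propiedades}(1) the $\tau_{\mathcal H}$-compact subsets of $G$ are precisely the elements of $\mathcal M$. Consequently $\tau_{co}$ is the topology of uniform convergence on $\mathcal M$. Given a Cauchy net $(\chi_\alpha)\subset \mathcal H$, for each $K\in\mathcal M$ the net $(\chi_\alpha|_K)$ is uniformly Cauchy into the complete group $\T$ and converges uniformly to some continuous $\chi_K:K\to\T$. These local limits are compatible on intersections (test on singletons) and patch into a single map $\chi:G\to\T$ which is continuous on every $K\in\mathcal M$. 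Applying the homomorphism identity $\chi_\alpha(x+y)=\chi_\alpha(x)\chi_\alpha(y)$ on the finite compact set $K=\{0,x,y,x+y\}$ and passing to the pointwise limit shows $\chi$ is a homomorphism; hence $\chi\in\mathcal H$ and $\chi_\alpha\to\chi$ in $\tau_{co}$ by construction.

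For the topological embedding $(G,w)^\wedge\hookrightarrow (G,\tau_{\mathcal H})^\wedge$, the algebraic inclusion is immediate since every $w$-continuous character restricts continuously to each element of $\mathcal M$. For the topological part, both dual groups carry the compact-open topology with respect to the same family $\mathcal M$ (again by Proposition~\ref{propiedades}(1)), so the subspace topology on $(G,w)^\wedge$ inherited from $(\mathcal H,\tau_{co})$ coincides with its own compact-open topology, making the inclusion a topological embedding.

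The final assertion, that $(G,\tau_{\mathcal H})^\wedge$ need not be the completion of $(G,w)^\wedge$, is where the main obstacle lies, since it requires producing an explicit $(G,w)\in\mathcal P_2$ in which $(G,w)^\wedge$ fails to be dense in $(\mathcal H,\tau_{co})$. The task reduces to exhibiting a character $\chi\in\mathcal H\setminus (G,w)^\wedge$ and some $K\in\mathcal M$ such that no net of $w$-continuous characters approximates $\chi$ uniformly on $K$; then the closure of $(G,w)^\wedge$ in $\mathcal H$, which is its completion by the first two steps, is a proper subgroup of $\mathcal H$. A natural source is the machinery used in \cite{BCMT} to refute the direct group-theoretic analogue of the Grothendieck completeness theorem: the characters continuous on compacta produced there are of exactly the type that populate $\mathcal H$ without being uniform-on-compacta limits of genuinely continuous characters. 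Verifying simultaneously precompactness of $w$ and that $(G,w)$ is not a $k_\T$-group in such an example is the technical crux of this step.
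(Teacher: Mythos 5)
Your treatment of the first two assertions is sound and essentially matches the paper's: the paper obtains completeness of $(G,\tau_{\mathcal H})^\wedge$ by citing the fact that the Pontryagin dual of a $k_\T$-group is complete (\cite[6.1.6]{TM}), together with Proposition \ref{propiedades}(2), while you reprove that fact directly via a Cauchy-net argument; and the identification of $(G,w)^\wedge$ as a topological subgroup, using that $w$ and $\tau_{\mathcal H}$ have the same compact sets so both duals carry uniform convergence on the same family $\mathcal M$, is exactly the paper's reasoning. Those parts are fine.

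The genuine gap is the third assertion. The clause ``$(G,\tau_{\mathcal H})^\wedge$ may not be the completion of $(G,w)^\wedge$'' is an existence claim whose entire content is a concrete example of a precompact non-$k_\T$ group where density fails, and you stop precisely at that point: you reduce it to finding $\chi\in\mathcal H\setminus(G,w)^\wedge$ not uniformly approximable on some $K\in\mathcal M$, gesture at the machinery of \cite{BCMT}, and explicitly defer ``the technical crux.'' The paper does supply the example, and moreover its argument sidesteps the approximation problem you flag: take $L:=L^2_\Z[0,1]\subset L^2[0,1]$ (as in \cite{Auss0}), and let $G:=L^\wedge$ with $w:=\sigma(L^\wedge,L)$. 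Then $w$ is precompact but not a $k_\T$-topology, because evaluation at any $x\in L^2[0,1]\setminus L$ is continuous on every $\sigma(L^\wedge,L)$-compact set (such sets are equicontinuous, since $L$ is $g$-barrelled, and on equicontinuous sets $\sigma(L^\wedge,L)$ agrees with the compact-open topology) yet is not $w$-continuous; the $k_\T$-extension is $\sigma(L^\wedge,L^{\wedge\wedge})$ with $L^{\wedge\wedge}\cong L^2[0,1]$. The decisive trick is then not an approximation estimate at all: $(G,w)^\wedge$ is topologically isomorphic to $L$, which is complete and metrizable, so it is a \emph{complete, hence closed}, proper subgroup of $(G,\tau_{\mathcal H})^\wedge$; a closed proper subgroup cannot be dense, so the larger dual is not the completion. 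Without this (or some equivalent) construction your proposal proves only the first two of the three claims.
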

\begin{proof}
	First, let us state that the Pontryagin dual of a $k_\T$-group is complete \cite[6.1.6]{TM}, therefore $(G, \tau_{\mathcal H})^\wedge$ is complete. Clearly,  $(G, w)^\wedge \subset (G, \tau_{\mathcal H})^\wedge$. Since   $w$ and $\tau_{\mathcal H}$ give rise to  the same compact subsets, the  dual group $(G, w)^\wedge$ is a topological subgroup of $(G, \tau_{\mathcal H})^\wedge$, where both  are considered with the compact-open topology.
	
	We must prove now that $ G^\wedge: = (G, w)^\wedge$ is not necessarily dense in  $(G, \tau_{\mathcal H})^\wedge =\mathcal H$   endowed with the compact-open topology.  The following example provides a proof of it.
	\end{proof}
\begin{ex}

{\bf A precompact group $G$ with complete Pontryagin dual, which is not  a $k_\T$-group.}

	 {\em We describe first some auxiliary tools before giving explicitly the example in Claim 1.\\
	 	
	 	Let $L^2[0, 1]$ be the Hilbert space of   square integrable functions on $[0,1]$, and let  $L: = L^2_\Z[0, 1] \subset L^2[0,1]$  be the subgroup  formed by all the almost everywhere integer valued functions, equipped with the induced topology.
	 This group is  considered  in \cite[Section 11]{Auss0}, where a remarkable  proof of the following fact is given:
	
	 (*) The Pontryagin dual of $L$  is topologically isomorphic to the  dual of $L^2[0,1]$, say  $$L^\wedge \approx (L^2[0,1])^\wedge$$ 
	 through the restriction mapping.
	
	  We  state some other  properties of $L$ and $L^\wedge$   needed for our argument: \\
	  \begin{itemize}
	  	\item[ (1)]
	  $L$ is not Pontryagin reflexive. In fact, the natural mapping from $L \to L^{\wedge \wedge}$ is a non-surjective embedding. This derives from (*) and from the Pontryagin reflexivity of  $L^2[0,1]$  as a Banach space. Thus,  $L^{\wedge \wedge} \approx  L^2[0,1]^{\wedge \wedge} \approx  L^2[0,1]$. 
	  
	 \item[ (2)]  $L$ is metrizable and complete, therefore its  dual group $L^\wedge$ endowed with the compact-open topology $c(L^\wedge, L)$  is a $k$-space (\cite{Auss0}, \cite{CH}). 
	 \item [(3)]  Every $\sigma (L^\wedge, L)$-compact subset of $L^\wedge$ is equicontinuous with respect to $L$.\\
	  (For a proof see \cite{CMT}.  The term $g$-barrelled defines  this property, see Section \ref{main}, \ref{def-g-barr}).
	 \item [(4)]  The natural mapping 
	   $L \to  (L^\wedge, \sigma (L^\wedge, L))^\wedge  $ is a topological isomorphism.\\ (This derives from (3) plus the local quasi-convexity of $L$.  A direct  proof  can be  seen  in \cite{BCMT}.)
	   \item [(5)]  $c(L^\wedge,L)$  and $\sigma (L^\wedge, L)$ induce the same topology  on  any  $M \subset L^\wedge$ which is equicontinuous with respect to $L$. (This fact is  well known).
	  \end{itemize}

	\begin{claim}
		$ G: = (L^\wedge, \sigma (L^\wedge, L))$  is a precompact group which is not a $k_\T$-group. 
	\end{claim}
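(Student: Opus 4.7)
The plan is to exploit the non-reflexivity of $L$ stated in (1): the image of $L$ in $L^{\wedge\wedge}$ is a proper subgroup, so there exists some $\phi\in L^{\wedge\wedge}\setminus L$. This $\phi$ will be the witness that $G=(L^\wedge,\sigma(L^\wedge,L))$ is not a $k_\T$-group.

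First I will verify that $G$ is precompact and identify its dual. The topology $\sigma(L^\wedge,L)$ is, by definition, the weak topology on $L^\wedge$ induced by the evaluation characters coming from $L$, so $G$ is precompact and Hausdorff (the latter because $L$ separates points of $L^\wedge$, e.g.\ via Pontryagin duality of $L^2[0,1]$ through (*)). By property (4), the dual group $G^\wedge=(L^\wedge,\sigma(L^\wedge,L))^\wedge$ is canonically $L$.

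Now I exhibit a character of $L^\wedge$ which is continuous on every $\sigma(L^\wedge,L)$-compact subset but not globally $\sigma(L^\wedge,L)$-continuous. Pick $\phi\in L^{\wedge\wedge}\setminus L$, which exists by (1). By definition of $L^{\wedge\wedge}$, the homomorphism $\phi:L^\wedge\to\T$ is continuous when $L^\wedge$ carries the compact-open topology $c(L^\wedge,L)$. Let $K\subseteq L^\wedge$ be any $\sigma(L^\wedge,L)$-compact subset. By (3), $K$ is equicontinuous with respect to $L$; by (5), the topologies $c(L^\wedge,L)$ and $\sigma(L^\wedge,L)$ coincide on $K$. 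Therefore the restriction $\phi|_K$, which is $c(L^\wedge,L)$-continuous, is also $\sigma(L^\wedge,L)$-continuous.

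On the other hand, if $\phi$ were globally $\sigma(L^\wedge,L)$-continuous, it would belong to $G^\wedge=L$, contradicting the choice $\phi\notin L$. So $\phi$ is a character whose restriction to every compact subset of $G$ is continuous but which is not continuous on $G$, which is exactly the failure of the $k_\T$-group property. I do not foresee a significant obstacle: all the ingredients are already assembled in the itemized list (1)--(5), and the argument is essentially the observation that the gap $L^{\wedge\wedge}\setminus L$ produces precisely the characters that detect this failure, using the coincidence of $c(L^\wedge,L)$ and $\sigma(L^\wedge,L)$ on equicontinuous sets.
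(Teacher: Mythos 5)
Your proof is correct and follows essentially the same route as the paper: the paper's witness is the evaluation $\tilde{x}$ at some $x\in L^2[0,1]\setminus L$, which via (*) and (1) is exactly your $\phi\in L^{\wedge\wedge}\setminus L$, and both arguments use (3) plus (5) to get continuity on every $\sigma(L^\wedge,L)$-compact set and the identification of $(L^\wedge,\sigma(L^\wedge,L))^\wedge$ with $L$ to rule out global continuity. No gaps.
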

{\bf Proof.} To show that $G$ is not a $k_\T$-group, we consider 	the evaluation mapping corresponding to an element  $x \in L^2[0,1] \setminus L$. If  $\tilde{x}:  L^\wedge \to \T$ denotes the evaluation, $\tilde{x}$ is a continuous character  with respect to  $ c(L^\wedge, L)$, therefore $\tilde{x}_{|H}$ is  continuous 	for  any subset $H\subset L^\wedge$.
 Let  $K \subset L^\wedge$   be $\sigma (L^\wedge, L)$-compact. By (3) $K$ is equicontinuous, and by (5)  $\tilde{x}_{|K}$  is $\sigma (L^\wedge, L)_{|K}$-continuous. However,   $\tilde{x} $ is not $ \sigma (L^\wedge, L)$-continuous. Observe that  the  continuous characters on $(L^\wedge, \sigma (L^\wedge, L) )$ are  the evaluation mappings on elements of $L$ and $x \notin L$.    This proves   Claim 1.

	 \begin{claim}
		  The  $k_\T$-extension of $\sigma (L^\wedge, L) $  is precisely $\sigma (L^\wedge, L^{\wedge \wedge})$. Thus, the group $(L^\wedge, \sigma (L^\wedge, L^{\wedge \wedge}))^\wedge $ is complete.
	 \end{claim}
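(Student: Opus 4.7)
The plan is to establish the identity $\mathcal{H} = L^{\wedge\wedge}$ by a double inclusion, invoking the properties (1)--(5) of $L$ and $L^\wedge$ listed above; the completeness statement will then be an immediate consequence of Proposition \ref{complecion}.

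First I would prove $L^{\wedge\wedge} \subseteq \mathcal{H}$. Under the identification $L^{\wedge\wedge}\approx L^2[0,1]$, each $x\in L^{\wedge\wedge}$ acts on $L^\wedge$ as the evaluation $\tilde x$, which is by definition continuous with respect to the compact-open topology $c(L^\wedge,L)$. Given any $\sigma(L^\wedge,L)$-compact $K\subset L^\wedge$, property (3) says $K$ is equicontinuous with respect to $L$, and property (5) then says $c(L^\wedge,L)|_K=\sigma(L^\wedge,L)|_K$. Hence $\tilde x|_K$ is $\sigma(L^\wedge,L)|_K$-continuous, so $\tilde x\in\mathcal H$.

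Next I would prove the reverse inclusion $\mathcal H\subseteq L^{\wedge\wedge}$, which I expect to be the main obstacle since it requires combining all three of the $g$-barrelled property (3), the coincidence (5) and the $k$-space property (2). Fix $f\in\mathcal H$. I claim $f$ is $c(L^\wedge,L)$-continuous, which by Pontryagin duality means $f\in L^{\wedge\wedge}$. By (2), $(L^\wedge,c(L^\wedge,L))$ is a $k$-space, so it suffices to check continuity of $f$ on each $c(L^\wedge,L)$-compact set $M$. Since $c(L^\wedge,L)\ge\sigma(L^\wedge,L)$, such an $M$ is a fortiori $\sigma(L^\wedge,L)$-compact, and by (3) it is equicontinuous; by (5) the two topologies agree on $M$. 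The hypothesis $f\in\mathcal H$ gives continuity of $f|_M$ in the weak topology, hence also in the compact-open one. The $k$-space property then promotes this to global $c(L^\wedge,L)$-continuity of $f$, so $f\in L^{\wedge\wedge}$.

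From $\mathcal H=L^{\wedge\wedge}$ and Definition \ref{kT-def} we read off $\tau_{\mathcal H}=\sigma(L^\wedge,\mathcal H)=\sigma(L^\wedge,L^{\wedge\wedge})$, which is the first assertion of Claim 2. For the completeness assertion, Proposition \ref{propiedades}(2) says $(L^\wedge,\tau_{\mathcal H})$ is a $k_\T$-group, and Proposition \ref{complecion} (which in turn rests on \cite[6.1.6]{TM}) ensures that its Pontryagin dual is complete. Under the identification just established, this dual is $(L^\wedge,\sigma(L^\wedge,L^{\wedge\wedge}))^\wedge$, so the proof is finished.
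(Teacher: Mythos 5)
Your proof is correct and follows essentially the same route as the paper: the identity $\mathcal H = L^{\wedge\wedge}$ by double inclusion, using (2) (the $k$-space property of $(L^\wedge,c(L^\wedge,L))$) for $\mathcal H\subseteq L^{\wedge\wedge}$ and the Claim 1 argument via (3) and (5) for the reverse, then Proposition \ref{complecion} for completeness. The only cosmetic difference is that in the inclusion $\mathcal H\subseteq L^{\wedge\wedge}$ you invoke (3) and (5) on the $c$-compact set $M$, whereas it suffices (as the paper does) to note that $c(L^\wedge,L)_{|M}$ is finer than $\sigma(L^\wedge,L)_{|M}$, so $\sigma$-continuity of $f_{|M}$ already gives $c$-continuity of $f_{|M}$.
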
  
 
{\bf Proof.} 
	 Fix a character  $f: (L^\wedge, \sigma (L^\wedge, L) ) \to \T $   such that $f_{|K} $ is continuous for every $\sigma (L^\wedge, L) $-compact $K \subset L^\wedge$. Since $\sigma (L^\wedge, L) \le c(L^\wedge, L)$, 
	   $f_{|K} $ is  continuous with respect to $c(L^\wedge, L)_{|K} $. By (2),  $f$ is continuous with respect to the compact-open topology  of $L^\wedge$,  therefore  $f \in L^{\wedge \wedge}$.\\
	 On the other hand,  from $ L^{\wedge \wedge} \approx L^2[0,1]^{\wedge \wedge}$ we deduce that every element in $ L^{\wedge \wedge}$ has the form $ \tilde{x} $ for some $x \in L^2[0,1]$.
	 Thus, the   $k_{\T}$-extension of $\sigma (L^\wedge, L) $ is  $\sigma (L^\wedge, L^{\wedge \wedge})$.
	   
	   The completeness of ($L^\wedge, \sigma(L^\wedge, L^{\wedge \wedge}))^\wedge $ is proved by Proposition \ref{complecion}.
	 
	\begin{claim}
 $(L^\wedge, \sigma (L^\wedge, L^{\wedge \wedge}))^\wedge$ is not the completion of 	    $G^\wedge = (L^\wedge, \sigma (L^\wedge, L))^\wedge$. 
	\end{claim}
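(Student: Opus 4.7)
The plan is to show that $G^\wedge$, with its compact-open topology, is already \emph{complete} and strictly \emph{smaller} as an abstract group than $(L^\wedge, \sigma(L^\wedge, L^{\wedge\wedge}))^\wedge$; since a complete Hausdorff topological group coincides with its own Raikov completion, this suffices.

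First I would pin down $G^\wedge$ as a topological group. By property (4), the natural evaluation map $L \to (L^\wedge, \sigma(L^\wedge, L))^\wedge = G^\wedge$ is a topological isomorphism when $G^\wedge$ carries the compact-open topology. Combined with property (2), which asserts that $L$ is metrizable and complete, this shows that $G^\wedge$ itself is a complete (metrizable) topological group. In particular the completion of $G^\wedge$ is $G^\wedge$.

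Next I would identify the underlying abstract group of $(L^\wedge, \sigma(L^\wedge, L^{\wedge\wedge}))^\wedge$. By the general principle invoked repeatedly in the paper (\cite[3.7]{CMT}), the continuous characters of a group equipped with a weak topology $\sigma(G,H)$ are precisely the elements of $H$; hence as an abstract group $(L^\wedge, \sigma(L^\wedge, L^{\wedge\wedge}))^\wedge = L^{\wedge\wedge}$. Property (1) identifies this with $L^2[0,1]$ via the natural embedding, and the inclusion $L = L^2_\Z[0,1] \subsetneq L^2[0,1]$ is plainly strict (any $L^2$ function taking a non-integer value on a set of positive measure witnesses this).

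Combining the two steps, $(L^\wedge, \sigma(L^\wedge, L^{\wedge\wedge}))^\wedge$ has strictly more elements than $G^\wedge \cong L$, so the two groups are different, and the former cannot be the completion of the latter. I do not foresee a serious obstacle: the argument rests entirely on correctly reading off the dual groups via (1), (2) and (4), together with the elementary observation that a complete Hausdorff topological group is its own completion. If one preferred a more direct route one could instead show that $G^\wedge$ is not dense in $(L^\wedge, \sigma(L^\wedge, L^{\wedge\wedge}))^\wedge$, but the completeness shortcut avoids having to analyze the compact-open topology on $L^{\wedge\wedge}$ and is cleaner.
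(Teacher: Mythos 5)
Your proposal is correct and follows essentially the same route as the paper: by (4) and (2), $G^\wedge\cong L$ is complete, while the dual of $(L^\wedge,\sigma(L^\wedge,L^{\wedge\wedge}))$ is $L^{\wedge\wedge}\cong L^2[0,1]$, which properly contains $L$, so the larger group cannot be the completion of the complete group $G^\wedge$. The extra detail you give (identifying the dual via \cite[3.7]{CMT} and noting the strictness of $L\subsetneq L^2[0,1]$) is consistent with what the paper leaves implicit.
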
 
 {\bf Proof.} By (4) $L$ is topologically isomorphic to $(L^\wedge, \sigma (L^\wedge, L))^\wedge$ and by  (2) $L$ is complete. Thus, $(L^\wedge, \sigma (L^\wedge, L))^\wedge$ is a complete proper subgroup of  $(L^\wedge, \sigma (L^\wedge, L^{\wedge \wedge}))^\wedge$.
	  This  proves the last statement of Proposition \ref{complecion}.} 
	\end{ex}

\section{On the existence of  $g$-barrelled topologies in a group duality}\label{main}

 The $g$-barrelled groups were introduced in \cite{CMT}.
They constitute  a class of abelian topological groups    which   is, in some sense,  the counterpart of the class of  barrelled spaces,  well-known objects in the theory of locally convex spaces.\\
 Next we give   some basic properties of $g$-barrelled groups, and  convenient notation to deal with them.

For a    topological group $(G, \tau)$,  the symbol  $\mathcal K$ will stand for  the family of all $\sigma (G^\wedge, G)$-compact subsets of $G^\wedge$.
	 The topology on $G$ of uniform convergence on the members of  $\mathcal K$ will be denoted by $\tau_{\mathcal K}$.   The family   $\{ K^\triangleleft,~ K \in \mathcal{K} \} $  describes a basis of zero-neighborhoods for the topology $\tau_{\mathcal K}$  on $G$. Having a basis of quasi-convex sets,  $\tau_{\mathcal K}$   is a locally quasi-convex topology. Any locally quasi-convex topology $\nu$ on a group $G$ is totally determined by the family $\mathcal E$ of all the  equicontinuous subsets that it produces in its dual group $(G, \nu)^\wedge$. More precisely, $\nu$ is the topology of uniform convergence on the sets of $\mathcal E$,  and    $\{ L^\triangleleft,  ~L \in \mathcal E \}$ is a basis of zero neighborhoods for $\nu$. 
	  A thorough study of this topic is done in \cite{JME}.

  \begin{defn}\label{def-g-barr} \cite{CMT}
  {\em 	A topological group $(G, \tau) $ is } $g$-barrelled {\em if every $\sigma (G^\wedge, G)$-compact subset  of $G^\wedge$ is equicontinuous with respect to $\tau$.
  	 The term  $g$-barrelled also applies to the topology $\tau $.}
  \end{defn}

  The class of $g$-barrelled groups  contains  the following subclasses of abelian  topological groups:   locally compact,  complete metrizable,  pseudocompact,  locally pseudocompact,  precompact Baire bounded torsion (see \cite{CMT}, \cite{HM}, \cite{DMT}, \cite{CDT}).

\begin{rems}\label{eq} 
	\begin{itemize}
		\item [(i)] 	{\em    Local quasi-convexity is not required in the   definition of a $g$-barrelled group. Nevertheless, if $(G, \tau)$ is a $g$-barrelled group,  and  $\mathcal Q \tau$ is the locally quasi-convex modification of $\tau$,  then $(G, \tau)^\wedge = (G, \mathcal Q \tau)^\wedge $ and   $(G, \mathcal Q \tau)$ is $g$-barrelled and locally quasi-convex.}
		\item[(ii)]{\em There is at most one $g$-barrelled locally quasi-convex topology on a topological group $G$ which is  compatible with the duality $(G, G^\wedge)$. }
	\end{itemize}
\end{rems}

The proof of (i) follows   from the fact that the equicontinuous subsets with respect to  $\tau$ and  $\mathcal Q \tau  $ coincide (\cite[Proposition 7.1]{JME}). The statement  (ii) has an straightforward proof.

 An interesting feature of locally quasi-convex $g$-barrelled groups is  that they are topologically isomorphic to duals of precompact groups. For further use we express this property as a lemma.
 \begin{lem}\cite[2.6]{BCDM}\label{L1}
 	Let $(G, \tau)$ be a locally quasi-convex,  $g$-barrelled group. Then,  the natural evaluation  mapping  $ e: (G, \tau) \to (G^\wedge, \sigma ( G^\wedge, G))^\wedge$ is a topological isomorphism.
 	
 \end{lem}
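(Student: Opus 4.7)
The plan is to show successively that $e$ is a well-defined group homomorphism, that it is an algebraic isomorphism, and finally that it is a topological embedding. The hypotheses are used as follows: local quasi-convexity provides a $\tau$-basis $\mathcal N$ of zero-neighborhoods $U=U^{\triangleright\triangleleft}$, while $g$-barrelledness identifies the family $\mathcal K$ of $\sigma(G^\wedge,G)$-compact subsets of $G^\wedge$ with the family of equicontinuous subsets.

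Well-definedness is immediate, since every evaluation $\tilde x\colon\chi\mapsto\chi(x)$ is $\sigma(G^\wedge,G)$-continuous by the definition of the weak topology. Injectivity is the MAP property of $(G,\tau)$: given $x\neq 0$, Hausdorffness produces $U\in\mathcal N$ with $x\notin U=U^{\triangleright\triangleleft}$, hence some $\chi\in U^\triangleright$ with $\chi(x)\notin\T_+$, so $\tilde x\neq 0$. Surjectivity is the general fact already cited in the paper as \cite[3.7]{CMT}: for a separated dual pair $(A,\mathcal L)$ the continuous characters of $(A,\sigma(A,\mathcal L))$ are precisely the elements of $\mathcal L$. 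Applied to the separating pair $(G^\wedge,e(G))$, this yields $(G^\wedge,\sigma(G^\wedge,G))^\wedge=e(G)$.

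For the topological part, recall that the compact-open topology on the right-hand side has zero-basis $\{K^\triangleleft:K\in\mathcal K\}$. Continuity: given $K\in\mathcal K$, $g$-barrelledness supplies $U\in\mathcal N$ with $K\subseteq U^\triangleright$, whence
\[
 e^{-1}(K^\triangleleft)=K^\triangleleft\supseteq U^{\triangleright\triangleleft}=U,
\]
a $\tau$-neighborhood of $0$. Openness: given a quasi-convex $U\in\mathcal N$, the polar $U^\triangleright$ is $\sigma(G^\wedge,G)$-closed as an intersection of preimages of the closed set $\T_+$, and the standard group Alaoglu argument (it sits inside the compact subspace of $\T^G$ consisting of functions sending $U$ into $\T_+$) shows it is $\sigma(G^\wedge,G)$-compact. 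Hence $K:=U^\triangleright\in\mathcal K$, and injectivity of $e$ gives $e(U)=K^\triangleleft\cap e(G)$, a relative zero-neighborhood in $e(G)$.

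I expect the main obstacle to be the bookkeeping around the surjectivity step, since that is where one leaves the domain of formal polar calculus and appeals to a substantive characterization of the dual of a weakly topologized group. Once the cited weak-duality identification $(A,\sigma(A,\mathcal L))^\wedge=\mathcal L$ is accepted, the rest is a direct polar computation matching $\mathcal K$ with the equicontinuous sets (by $g$-barrelledness together with Alaoglu) and $U$ with $U^{\triangleright\triangleleft}$ (by local quasi-convexity), forcing $\tau=\tau_{\mathcal K}=e^{-1}(\text{compact-open topology})$ and delivering the topological isomorphism.
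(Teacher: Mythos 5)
Your proof is correct and follows essentially the same route the paper itself takes (the lemma is only cited from \cite{BCDM}, but your argument is exactly the one underlying Propositions \ref{bar1} and \ref{bar2}): the Comfort--Ross identification $(G^\wedge,\sigma(G^\wedge,G))^\wedge=e(G)$, plus the polar calculus in which $g$-barrelledness and the Alaoglu-type compactness of $U^{\triangleright}$ match the $\sigma(G^\wedge,G)$-compact sets with the equicontinuous ones, and quasi-convexity ($U=U^{\triangleright\triangleleft}$) forces $\tau$ to coincide with the compact-open topology pulled back along $e$. The only implicit point is that $(G,\tau)$ is assumed Hausdorff (hence MAP, giving injectivity of $e$), which is the standing convention here and which you use explicitly.
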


As we  explained in the introduction,  there are group dualities without $g$-barrelled topologies.
Now the question is to find  conditions on a topological group $(G, \tau)$ or in its dual $G^\wedge$,  which imply the existence of $g$-barrelled topologies on $G$  compatible with $\tau$. By the remark (i) in \ref{eq}, the question may be equivalently  reformulated as follows:  under which conditions is there a $g$-barrelled locally quasi-convex topology  in a fixed  duality  $(G, G^\wedge)$?

  \begin{pro} \label{bar1}
  	Let $(G, \tau)$ be a MAP topological group.
  	There exists a $g$-barrelled locally quasi-convex  topology on $G$ in the dual pair $(G, G^\wedge)$ if and only if  $\tau_{\mathcal K}$ is compatible with $\tau$. 
\end{pro}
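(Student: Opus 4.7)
The plan is to prove the equivalence directly from the construction of $\tau_{\mathcal K}$ as the topology of uniform convergence on the family $\mathcal K$ of $\sigma(G^\wedge,G)$-compact subsets of $G^\wedge$. By construction $\tau_{\mathcal K}$ is already locally quasi-convex, with fundamental system of zero-neighborhoods $\{K^\triangleleft:K\in\mathcal K\}$; so the whole question is really about compatibility and $g$-barrelledness.

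For the sufficiency (``if'') direction, assuming $(G,\tau_{\mathcal K})^\wedge=G^\wedge$, I would note first that $\sigma(G^\wedge,G)$ is determined purely by the duality $(G,G^\wedge)$, so the family $\mathcal K$ computed from $\tau$ and from $\tau_{\mathcal K}$ is literally the same set of subsets of $G^\wedge$. Given any $K\in\mathcal K$, its polar $K^\triangleleft$ is a $\tau_{\mathcal K}$-neighborhood of zero by definition of $\tau_{\mathcal K}$, and from $K\subseteq K^{\triangleleft\triangleright}=(K^\triangleleft)^\triangleright$ we read off that $K$ is $\tau_{\mathcal K}$-equicontinuous. Thus $\tau_{\mathcal K}$ is $g$-barrelled and locally quasi-convex, giving one compatible topology of the required type.

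For the necessity (``only if'') direction, let $\nu$ be a locally quasi-convex $g$-barrelled topology on $G$ with $(G,\nu)^\wedge=G^\wedge$, and let $\mathcal E_\nu$ denote its family of equicontinuous subsets of $G^\wedge$. The $g$-barrelledness of $\nu$ yields $\mathcal K\subseteq\mathcal E_\nu$, while local quasi-convexity of $\nu$ says that $\nu$ is the topology of uniform convergence on $\mathcal E_\nu$ (the characterization recalled just before Definition~\ref{def-g-barr}). Combining these, $\tau_{\mathcal K}\le\nu$, whence $(G,\tau_{\mathcal K})^\wedge\subseteq(G,\nu)^\wedge=G^\wedge$. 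For the reverse inclusion, each $\chi\in G^\wedge$ produces a singleton $\{\chi\}\in\mathcal K$ and $\chi$ sends $\{\chi\}^\triangleleft$ into $\T_+$, so $\chi$ is $\tau_{\mathcal K}$-continuous. Hence $(G,\tau_{\mathcal K})^\wedge=G^\wedge$, i.e.\ $\tau_{\mathcal K}$ is compatible with $\tau$.

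The only conceptually delicate point is recognizing that $\mathcal K$ is an invariant of the duality rather than of the chosen topology on $G$, so that the $g$-barrelledness condition for $\tau_{\mathcal K}$ really is about the same family of subsets of $G^\wedge$ that enters the definition of $\tau_{\mathcal K}$ itself; this is what lets the first direction become essentially a tautology and allows the second direction to compare $\nu$ and $\tau_{\mathcal K}$ through their common equicontinuous structure. The MAP assumption on $\tau$ is needed only to ensure the duality $(G,G^\wedge)$ is separated, so that $\tau_{\mathcal K}$ is Hausdorff and the evaluation of characters at singletons in the last step behaves as expected.
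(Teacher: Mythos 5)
Your proof is correct, and the sufficiency direction is the same as the paper's: once $\tau_{\mathcal K}$ is assumed compatible, the observation that $\mathcal K$ is an invariant of the duality together with $K\subseteq (K^\triangleleft)^\triangleright$ gives $g$-barrelledness of $(G,\tau_{\mathcal K})$ at once. In the necessity direction you take a genuinely leaner route. The paper proves that any locally quasi-convex $g$-barrelled compatible topology $\mu$ in fact \emph{equals} $\tau_{\mathcal K}$: the inequality $\tau_{\mathcal K}\le\mu$ comes, as in your argument, from $g$-barrelledness ($\mathcal K\subseteq\mathcal E_\mu$), while the reverse inequality $\mu\le\tau_{\mathcal K}$ uses the Alaoglu-type fact that the polar $V^\triangleright$ of a quasi-convex $\mu$-neighbourhood of zero is $\sigma(G^\wedge,G)$-compact, together with $V=V^{\triangleright\triangleleft}$; compatibility of $\tau_{\mathcal K}$ is then inherited from $\mu$. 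You dispense with that second half: from $\tau_{\mathcal K}\le\nu$ you get $(G,\tau_{\mathcal K})^\wedge\subseteq G^\wedge$, and the reverse inclusion is free because $\tau_{\mathcal K}$ refines $\sigma(G,G^\wedge)$ (your singleton/polar step), so no compactness of polars is needed. What the paper's longer argument buys is the identification $\mu=\tau_{\mathcal K}$, i.e.\ the uniqueness of the compatible locally quasi-convex $g$-barrelled topology, which is recorded in Remarks \ref{eq}(ii) and used later (for instance in Proposition \ref{lc}); your version would still need the compactness-of-polars step to recover that. One marginal point: the MAP hypothesis is not what makes the singleton step work; its role is simply that the duality $(G,G^\wedge)$ is separated, so that $\sigma(G,G^\wedge)$, and hence $\tau_{\mathcal K}$, is Hausdorff.
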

\begin{proof}
 Observe first that all the topologies compatible with $\tau$ produce, in the common dual group $G^\wedge$, the same family of $\sigma (G^\wedge, G)$-compact subsets as $\tau$. In other words, the family  $\mathcal K$  is an invariant of the duality $(G, G^\wedge)$.
Assume now that $\tau_{\mathcal{K} }$ is compatible with $\tau$. Clearly, every $K \in \mathcal{K}$ is equicontinuous
	 with respect to $\tau_{\mathcal{K}}$. So  $\tau_{\mathcal K}$ is the unique   $g$-barrelled, locally quasi-convex topology on $G$ which is in the dual pair $(G, G^\wedge)$.

For the converse implication, we prove first that the existence of a $g$-barrelled locally quasi-convex topology $\mu$  on $G$ compatible with the pair $(G, G^\wedge)$ implies $\mu = \tau_{\mathcal K}$. 
Assume  $\mu$ meets the mentioned requirements. Choose $V \subset G$ a quasi-convex  neighborhood of zero in $\mu$. Then  $V^\triangleright$ is $\sigma (G^\wedge, G) $-compact, and therefore
$V^{\triangleright \triangleleft}$ is a neighborhood of zero.  From $V^{\triangleright \triangleleft} = V$, we obtain $\mu \leq \tau_{\mathcal K}$.
For the converse inequality, fix  $K \in \mathcal K$. Since $ (G, \mu)$ is $g$-barrelled, $K$   is equicontinuous with respect to $\mu$. Thus, it exists a $\mu$-neighborhood of zero $W$ such that $W \subset K^\triangleleft $. This implies that 
$\tau_{\mathcal K} \leq \mu$.\
 \end{proof}

 By  remark (i) in \ref{eq}, if $\tau_{\mathcal K}$ is not compatible, there are no $g$-barrelled topologies in the dual pair. Next we give a necessary and sufficient condition for  $\tau_{\mathcal K}$ to be  compatible with $\tau$, or equivalently,  to be
 compatible with $\sigma(G, G^\wedge)$.

\begin{pro}\label{bar2}
	Let $(G,  \tau)$ be an abelian MAP topological group.
	 The following statements are equivalent:
	\begin{itemize}
		\item[(1)]
		$\tau_{\mathcal K}$ is compatible with $\sigma  (G, G^\wedge)$, that is $(G, \tau_{\mathcal{K}})^\wedge  = G^\wedge$.

\item[(2)] The group $(G^\wedge, \sigma (G^\wedge, G))$ is semireflexive.
	\end{itemize}
\end{pro}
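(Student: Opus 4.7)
The plan is to show that conditions (1) and (2) both reduce to the bijectivity of the canonical evaluation map $\alpha: G^\wedge \to (G, \tau_{\mathcal K})^\wedge$ that sends $\chi$ to the character $g \mapsto \chi(g)$.

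First, I would identify the Pontryagin bidual of $(G^\wedge, \sigma(G^\wedge, G))$. The standard fact recorded in \cite[3.7]{CMT}, already used elsewhere in the paper, says that the continuous characters of a group carrying a weak topology $\sigma(G^\wedge, G)$ are exactly the evaluations by elements of $G$. Since $(G, \tau)$ is MAP, distinct elements of $G$ yield distinct evaluations, so $(G^\wedge, \sigma(G^\wedge, G))^\wedge$ can be identified, as an abstract group, with $G$. Its Pontryagin topology is the compact-open topology for the family of $\sigma(G^\wedge, G)$-compact subsets of $G^\wedge$, which is precisely $\mathcal K$; hence this topology is $\tau_{\mathcal K}$. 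Therefore the Pontryagin bidual of $(G^\wedge, \sigma(G^\wedge, G))$ is $(G, \tau_{\mathcal K})^\wedge$, and the natural evaluation of $G^\wedge$ into its bidual is the map $\alpha$.

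Next I would check that $\alpha$ is always well-defined and injective. Every finite subset of $G^\wedge$ is $\sigma(G^\wedge, G)$-compact, so its inverse polar is a $\tau_{\mathcal K}$-neighborhood of zero; hence $\tau^+ \le \tau_{\mathcal K}$, and consequently $G^\wedge = (G, \tau^+)^\wedge \subseteq (G, \tau_{\mathcal K})^\wedge$. This shows that $\alpha$ takes values in $(G, \tau_{\mathcal K})^\wedge$; injectivity is clear because evaluation at a nonzero $\chi \in G^\wedge$ is a nontrivial character of $G$. By definition, Pontryagin semireflexivity of $(G^\wedge, \sigma(G^\wedge, G))$ is the surjectivity (hence bijectivity) of $\alpha$. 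On the other hand, condition (1) says exactly that $(G, \tau_{\mathcal K})^\wedge = G^\wedge$, which, together with the inclusion established above, is the same as surjectivity of $\alpha$. Thus (1) and (2) are equivalent.

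No serious obstacle is anticipated; the only delicate point is being explicit that semireflexivity refers to the underlying group of the bidual and does not require the evaluation to be a topological isomorphism, so that the set-theoretic equality $(G, \tau_{\mathcal K})^\wedge = G^\wedge$ is precisely what is needed.
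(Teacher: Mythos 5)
Your argument is correct and follows essentially the same route as the paper: both identify the Pontryagin dual of $(G^\wedge,\sigma(G^\wedge,G))$ with $(G,\tau_{\mathcal K})$ (Comfort--Ross plus the observation that the compact-open topology for $\mathcal K$ is exactly $\tau_{\mathcal K}$), so that the bidual is $(G,\tau_{\mathcal K})^\wedge$ and semireflexivity becomes the equality $(G,\tau_{\mathcal K})^\wedge=G^\wedge$. Your explicit remarks that $\sigma(G,G^\wedge)\le\tau_{\mathcal K}$ (so $G^\wedge\subseteq(G,\tau_{\mathcal K})^\wedge$) and that semireflexivity concerns surjectivity of the canonical evaluation are just slightly more detailed versions of steps the paper leaves implicit.
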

\begin{proof}
	
	The proof is an easy consequence of the following argument. By Comfort-Ross Theorem,  $(G^\wedge, \sigma (G^\wedge, G))^\wedge$ can be algebraically identified with  $G$ by means of the  evaluation mapping $e : G \to (G^\wedge, \sigma(G^\wedge, G))^\wedge$, defined by $ x \mapsto \widetilde{ x}: \phi \mapsto \phi(x)$.   On the other hand the  topology for the Pontryagin dual $(G^\wedge, \sigma(G^\wedge, G))^\wedge$ is the topology of uniform convergence on  the $\sigma (G^\wedge, G)$-compact subsets of $G^\wedge$. A zero neighborhood basis for $(G^\wedge, \sigma(G^\wedge, G))^\wedge$  is given by the family $\{ K^\triangleright, ~ K \in \mathcal K \}$, whilst a zero  neighborhood basis for $ (G, \tau_{\mathcal K} )$ is given by the family $\{K^\triangleleft, ~K \in \mathcal K \}$.    The direct and inverse polars 
	 can be identified since $e$ is bijective and  $e (K^\triangleleft) = K^\triangleright$.   Thus, $e$ is a topological isomorphism:
	$$(G, \tau_{\mathcal K}) \stackrel{e}{\approx} (G^\wedge, \sigma(G^\wedge, G))^\wedge$$
	Taking now duals in both sides we obtain:
	$$(G, \tau_{\mathcal K})^\wedge \approx (G^\wedge, \sigma(G^\wedge, G))^{\wedge \wedge } $$
In order to prove that	(1) $\Rightarrow$ (2), assume that  $\tau_\mathcal{K}$ is compatible with $\tau$, that is $(G, \tau_{\mathcal K})^\wedge = G^\wedge$. From the last isomorphism, it follows that $(G^\wedge, \sigma(G^\wedge, G))$ is semireflexive.\\
The implication 	(2)$\Rightarrow$ (1)   also follows from the mentioned isomorphism.
	\end{proof}

The results of Propositions \ref{bar1} and \ref{bar2} yield the following:
\begin{thm}\label{existencia}
	Let $(G, \tau)$ be a MAP topological group. The following assertions are equivalent:
	\begin{itemize}
	 	\item [(1)]
		There exists a $g$-barrelled topology in the duality $(G, G^\wedge)$.
		\item[(2)]
		 $(G^\wedge, \sigma (G^\wedge, G))$ is semireflexive.
		 \item[(3)]
		 The topology $\tau_{\mathcal K}$ on $G$ is compatible with $\tau$.
	\end{itemize}
\end{thm}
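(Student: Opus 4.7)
The plan is to obtain Theorem \ref{existencia} as a direct synthesis of Propositions \ref{bar1} and \ref{bar2}, with Remark \ref{eq}(i) bridging the gap between general $g$-barrelled topologies and locally quasi-convex $g$-barrelled ones.

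First, I would establish the equivalence (2) $\Leftrightarrow$ (3), which is literally the content of Proposition \ref{bar2}, so nothing new needs to be done there. The semireflexivity of $(G^\wedge, \sigma(G^\wedge, G))$ is exactly the condition under which $(G, \tau_{\mathcal K})^\wedge = G^\wedge$.

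For the harder direction (1) $\Leftrightarrow$ (3), I would argue as follows. The implication (3) $\Rightarrow$ (1) is immediate from Proposition \ref{bar1}: if $\tau_{\mathcal K}$ is compatible with $\tau$, then $\tau_{\mathcal K}$ itself is a locally quasi-convex $g$-barrelled topology in the duality $(G, G^\wedge)$, and in particular a $g$-barrelled topology as required by (1). For (1) $\Rightarrow$ (3), suppose that $\mu$ is some $g$-barrelled topology in the duality $(G, G^\wedge)$. This $\mu$ need not a priori be locally quasi-convex, which is why Proposition \ref{bar1} cannot be applied directly. Here I would invoke Remark \ref{eq}(i): the locally quasi-convex modification $\mathcal Q \mu$ satisfies $(G, \mathcal Q \mu)^\wedge = (G, \mu)^\wedge = G^\wedge$ and is itself $g$-barrelled. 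Hence $\mathcal Q \mu$ is a locally quasi-convex $g$-barrelled topology in the duality $(G, G^\wedge)$, so Proposition \ref{bar1} yields the compatibility of $\tau_{\mathcal K}$ with $\tau$.

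The main conceptual obstacle, which has already been handled in the preliminary material, is ensuring that the passage from an arbitrary $g$-barrelled topology to a locally quasi-convex one preserves both the $g$-barrelled property and the dual group. This is exactly what Remark \ref{eq}(i) provides, by way of the coincidence of equicontinuous subsets for $\tau$ and $\mathcal Q\tau$ cited from \cite[Proposition 7.1]{JME}. Once that is in hand, the theorem is a formal chaining:
\[
(1) \;\Longleftrightarrow\; (1'):\text{ locally quasi-convex $g$-barrelled topology in the duality} \;\Longleftrightarrow\; (3) \;\Longleftrightarrow\; (2),
\]
where the middle equivalence is Proposition \ref{bar1} and the last is Proposition \ref{bar2}. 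No further computation is needed, so the proof will be quite short.
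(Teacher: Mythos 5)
Your proof is correct and follows exactly the paper's route: the theorem is stated there as an immediate consequence of Propositions \ref{bar1} and \ref{bar2}, with Remark \ref{eq}(i) supplying the passage from an arbitrary $g$-barrelled topology to its locally quasi-convex modification, precisely as you do for (1) $\Rightarrow$ (3).
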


For  a MAP topological group $(G, \tau)$ which does not satisfy the conditions of the preceding theorem,
  it is  natural to ask if $(G, \tau_{\mathcal K})$ can  still be  $g$-barrelled  in the new duality it generates.  We  provide below a result in this line.
  First, recall  the notion of {\it determined subgroup}.
\begin{defn}
	{\em  A  subgroup $Y$ of an abelian topological group  $(X, \tau)$  is said {\it to determine} $X$ if the inclusion  $i: (Y, \tau_{|Y}) \to (X,\tau)$ has a dual mapping $i^\wedge :(X,\tau)^\wedge \to (Y, \tau_{|Y})^\wedge$ which is a topological isomorphism. It is  frequent to call $Y$ a {\it determined subgroup of $X$}}.
\end{defn}
The  above mentioned dual goups carry the compact-open topology, in other words   they are  Pontryagin  duals. Clearly,  the restriction mapping  $i^\wedge$ is continuous without additional conditions on $X$ or $Y$. If $Y$ is dense in $X$, then $i^\wedge$ is monomorphism.  Thus, the only specific property to be a determined subgroup is that the mapping  $i^\wedge$ must be open. This is achieved if for every compact set $K \subset X$ there is a compact set $L \subset Y$ such that $i^\wedge (K ^\triangleright) \supset L^\triangleright$. In what follows we relax this expression and simply say that $L^\triangleright \subset K^\triangleright$, which permits also to say   that the compact-open topologies in $X^\wedge$ and $Y^\wedge$ coincide.
\begin{thm} \label{determined}
 Let $(G,\tau)$ be a MAP topological group, and let
$\mathcal J : =  (G, \tau_{\mathcal{K}})^\wedge$.  The group $(G, \tau_{\mathcal K})$ is $g$-barrelled iff $(G^\wedge,\sigma (G^\wedge, G))  $ determines $(\mathcal J, \sigma (\mathcal J, G))$.
\end{thm}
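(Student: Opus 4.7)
My plan is to translate both $g$-barrelledness of $(G, \tau_{\mathcal K})$ and the determinacy condition into the same inequality between two explicit topologies on $G$, and then verify it via polar manipulations analogous to those in Proposition \ref{bar1}. Write $\mathcal K_{\mathcal J}$ for the family of $\sigma(\mathcal J, G)$-compact subsets of $\mathcal J$ and $\tau_{\mathcal K_{\mathcal J}}$ for the topology on $G$ of uniform convergence on $\mathcal K_{\mathcal J}$. Note first that $G^\wedge \subset \mathcal J$ (since each singleton $\{\chi\}$ with $\chi \in G^\wedge$ lies in $\mathcal K$, and hence $\{\chi\}^\triangleleft$ is a $\tau_{\mathcal K}$-neighborhood of zero), and that $\sigma(\mathcal J, G)$ restricted to $G^\wedge$ coincides with $\sigma(G^\wedge, G)$, so $\mathcal K \subseteq \mathcal K_{\mathcal J}$ and consequently $\tau_{\mathcal K} \leq \tau_{\mathcal K_{\mathcal J}}$.

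First I would identify the two compact-open duals appearing in the determinacy condition. The argument in the proof of Proposition \ref{bar2} already gives an evaluation isomorphism $e_1 : (G, \tau_{\mathcal K}) \to (G^\wedge, \sigma(G^\wedge, G))^\wedge$. The very same Comfort--Ross-type argument applies to the precompact MAP group $(\mathcal J, \sigma(\mathcal J, G))$ (which is Hausdorff because $G$ separates the characters in $\mathcal J$), yielding an evaluation isomorphism $e_2 : (G, \tau_{\mathcal K_{\mathcal J}}) \to (\mathcal J, \sigma(\mathcal J, G))^\wedge$. Since $i^\wedge(e_2(x)) = \widetilde x_{\mathcal J} \circ i = \widetilde x_{G^\wedge} = e_1(x)$, after these identifications the restriction map $i^\wedge$ is just the identity on $G$. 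Therefore $(G^\wedge, \sigma(G^\wedge, G))$ determines $(\mathcal J, \sigma(\mathcal J, G))$ if and only if $\tau_{\mathcal K_{\mathcal J}} = \tau_{\mathcal K}$, which by the inequality above is equivalent to $\tau_{\mathcal K_{\mathcal J}} \leq \tau_{\mathcal K}$.

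Finally I would verify that $(G, \tau_{\mathcal K})$ is $g$-barrelled iff $\tau_{\mathcal K_{\mathcal J}} \leq \tau_{\mathcal K}$. For the forward direction, if every $L \in \mathcal K_{\mathcal J}$ is $\tau_{\mathcal K}$-equicontinuous, then $L \subseteq V^\triangleright$ (polar taken in $\mathcal J$) for some $\tau_{\mathcal K}$-neighborhood of zero $V$; taking polars yields $L^\triangleleft \supseteq V^{\triangleright\triangleleft} \supseteq V$, so each basic $\tau_{\mathcal K_{\mathcal J}}$-neighborhood $L^\triangleleft$ is a $\tau_{\mathcal K}$-neighborhood, giving $\tau_{\mathcal K_{\mathcal J}} \leq \tau_{\mathcal K}$. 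Conversely, if $\tau_{\mathcal K_{\mathcal J}} \leq \tau_{\mathcal K}$, then for any $L \in \mathcal K_{\mathcal J}$ the set $L^\triangleleft$ is a $\tau_{\mathcal K}$-neighborhood of zero and the standard bipolar inclusion $L \subseteq L^{\triangleleft\triangleright}$ exhibits $L$ as $\tau_{\mathcal K}$-equicontinuous. Chaining the two equivalences yields the theorem.

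The main obstacle will be the bookkeeping: carefully distinguishing polars of subsets of $G$ taken in $G^\wedge$ from those taken in $\mathcal J$, and checking that the two Comfort--Ross identifications of the compact-open duals are compatible with the restriction $i^\wedge$. Once those identifications are set up cleanly, the rest of the argument is routine polar arithmetic essentially parallel to Proposition \ref{bar1}.
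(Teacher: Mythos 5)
Your proposal is correct and follows essentially the same route as the paper: identify both compact--open duals with $G$ via the Comfort--Ross evaluation isomorphisms (as in Proposition \ref{bar2}) and then run the polar arithmetic between $\sigma(\mathcal J,G)$-compact sets and $\tau_{\mathcal K}$-neighborhoods. Your packaging through the auxiliary topology $\tau_{\mathcal K_{\mathcal J}}$, which turns both the determinacy condition and $g$-barrelledness into the single inequality $\tau_{\mathcal K_{\mathcal J}}\leq\tau_{\mathcal K}$, is just a cleaner bookkeeping of the same argument (it even bypasses the quasi-convexity step $L^{\triangleleft}=(L^{\triangleleft\blacktriangleright})^{\triangleleft}$ used in the paper), so no changes are needed.
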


\begin{proof}
 Clearly $G^\wedge \subset \mathcal{J} \subset Hom (G, \T)$ and   $(G^\wedge,\sigma (G^\wedge, G))  $ is a topological subgroup of $(\mathcal J, \sigma (\mathcal J, G))$. Since $G$ is MAP,  $G^\wedge$ is dense in $Hom_p (G, \T)$. 
 Therefore $G^\wedge$ is also dense in $(\mathcal J, \sigma (\mathcal J, G))$ and their   dual groups  can be algebraically identified, which we  simply write as an equality:
	$$(G^\wedge,\sigma (G^\wedge, G))^\wedge = (\mathcal J, \sigma (\mathcal J, G))^\wedge $$
	
$\Rightarrow$)	Assume  that $(G, \tau_{\mathcal K})$ is $g$-barrelled.  We must prove that the compact-open topology in 	 $(G^\wedge,\sigma (G^\wedge, G))^\wedge$ and in $ (\mathcal J, \sigma (\mathcal J, G))^\wedge$ coincide (the underlying set of both of them can be identified to $G$).
	
	To this end, fix  $K \subset \mathcal J$  a  $\sigma (\mathcal J, G)$-compact subset. We must find a $\sigma (G^\wedge, G)$-compact subset  $L \subset G^\wedge$, such that $L^\triangleright \subset K^\triangleright$. Since $(G, \tau_{\mathcal K})$ is $g$-barrelled, there  exists a $\tau_{\mathcal K}$-zero neighborhood  $V$ such that $K \subset V^\blacktriangleright$ (the black triangle symbol indicates that the  polar is taken in $\mathcal J$).  By the definition of $\tau_{\mathcal K}$,  $V \supset L^\triangleleft$  for some $L \subset G^\wedge$ which is $\sigma (G^\wedge, G)$-compact.
	    Thus $K \subset L^{\triangleleft \blacktriangleright}$, and
	    taking    polars on both sides
	     we obtain:  $K^\triangleright \supset ( L^{\triangleleft \blacktriangleright})^\triangleright $.
	
	    On the other hand $L^\triangleleft \subset G$ is quasi-convex in $\tau_{\mathcal K}$, therefore $L^\triangleleft = (L^\triangleleft)^{\blacktriangleright \triangleleft} = ( L^{\triangleleft \blacktriangleright})^\triangleleft $.

	 Implementing  this in the above expression, we get:
	
	    $$K^\triangleright \supset ( L^{\triangleleft \blacktriangleright})^\triangleright = ( L^{\triangleleft})^{ \blacktriangleright \triangleleft}  = L^\triangleleft$$
	
Finally  the inverse polar $L^\triangleleft$ can be identified with $L^\triangleright$, since $(G^\wedge, \sigma(G^\wedge, G))^\wedge = G$. Thus, we can simply write   $K^\triangleright \supset L^\triangleright$, which  proves that $(G^\wedge,\sigma (G^\wedge, G))  $ determines $(\mathcal J, \sigma (\mathcal J, G))$.

$\Leftarrow)$   In order to prove that $(G, \tau_{\mathcal K})$ is $g$-barrelled,  fix now a $\sigma (\mathcal J, G)$-compact subset  $K$ of $\mathcal J$. By the assumption,  
there exists a $\sigma( G^\wedge, G)$-compact   $L \subset G^\wedge$ such that  $K^\triangleright \supset L^\triangleright$. Here the polars are taken in  $(\mathcal J, \sigma (\mathcal J, G))^\wedge $ and $(G^\wedge,\sigma (G^\wedge, G))^\wedge$ respectively, but both dual groups are identified, as said above. Taking inverse polars with respect to $\mathcal J$ we can write:
$K^{\triangleright\triangleleft} \subset L^{\triangleright\triangleleft}$,
Thus: $$ K \subset K^{\triangleright\triangleleft} \subset L^{\triangleright\triangleleft} \subset L^{\triangleleft \triangleright}$$
Since $L^\triangleleft$ is a neighborhood of zero in $\tau_{\mathcal K}$,  $K$ is equicontinuous and therefore $(G, \tau_{\mathcal K})$ is $g$-barrelled.
	\end{proof}
Concerning the last theorem, it arises the question whether  the claim ``$(G^\wedge,\sigma (G^\wedge, G))  $ determines $(\mathcal J, \sigma (\mathcal J, G))$" is always true. Thus, we leave an open problem:
\begin{q}
{\em	Give an example of a topological group $(G, \tau)$ such that
	 $(G^\wedge,\sigma (G^\wedge, G))  $ does not determine $(\mathcal J, \sigma (\mathcal J, G))$, where $\mathcal J = (G, \tau_{\mathcal K})^\wedge$.}
\end{q}

   Denote by $\mathcal B$  the class of all precompact Hausdorff abelian groups  which are the  Bohr reflection of a locally compact group,   as in  \cite{HT}. Explicitly, $ (G, w) \in \mathcal B $ if there exists a locally compact group topology $\tau$ on $G$  such that $\tau^+ = w$. We end this section with two results which might complement the contents of \cite{HT}. The first one provides  examples of groups which are not in $\mathcal B$.  Loosely speaking, if a topological group $(G, \tau)$ gives rise to a duality which contains a locally quasi-convex $g$-barrelled non locally compact topology, then $(G, \tau^+)$ is not in $\mathcal B$.

\begin{pro} \label{lc}
	Let $(G, \tau)$ be a topological group  such  that $\tau_\mathcal{K}$  
	is compatible with $\tau$.  If $(G, \tau_{\mathcal{K}})$ is not locally compact, then $(G, \tau^+) \notin \mathcal{B}$.
	
\end{pro}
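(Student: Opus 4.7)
The plan is to argue by contradiction and invoke the uniqueness of the $g$-barrelled locally quasi-convex topology within a given duality (Remark \ref{eq}(ii), together with Proposition \ref{bar1}). Suppose, toward a contradiction, that $(G,\tau^{+})\in\mathcal{B}$. By definition of $\mathcal{B}$, there exists a locally compact group topology $\rho$ on $G$ such that $\rho^{+}=\tau^{+}$. In particular $\rho$ and $\tau$ share the same continuous characters, i.e.\ $(G,\rho)^{\wedge}=(G,\tau^{+})^{\wedge}=(G,\tau)^{\wedge}=G^{\wedge}$, so $\rho$ is a topology on $G$ compatible with the duality $(G,G^{\wedge})$.

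Next I would use two standard facts about locally compact abelian groups: they are locally quasi-convex (Banaszczyk), and they are $g$-barrelled (explicitly listed among the examples right after Definition \ref{def-g-barr}). Hence $\rho$ is a $g$-barrelled, locally quasi-convex topology on $G$ lying in the duality $(G,G^{\wedge})$. By the hypothesis, $\tau_{\mathcal K}$ is also compatible with $\tau$, and by construction it is locally quasi-convex; by Proposition \ref{bar1} (or directly by Remark \ref{eq}(ii)) $\tau_{\mathcal K}$ is the unique such topology in the pair $(G,G^{\wedge})$. Therefore $\rho=\tau_{\mathcal K}$.

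But then $\tau_{\mathcal K}=\rho$ would be locally compact, directly contradicting the assumption that $(G,\tau_{\mathcal K})$ is not locally compact. This contradiction shows that no locally compact topology on $G$ can have $\tau^{+}$ as its Bohr topology, i.e.\ $(G,\tau^{+})\notin\mathcal{B}$.

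The only non-trivial ingredient is the uniqueness clause, which relies on both parts of Remark \ref{eq} (the fact that the equicontinuous subsets of $G^{\wedge}$ are the same for $\tau$ and its locally quasi-convex modification, so $g$-barrelledness passes to $\mathcal{Q}\tau$), and on the already recorded fact that locally compact abelian groups fit into the locally quasi-convex $g$-barrelled framework. Apart from that, the argument is a short contradiction and should fit in a few lines.
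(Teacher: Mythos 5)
Your argument is correct and is essentially the paper's own proof: both rest on the uniqueness of the $g$-barrelled locally quasi-convex topology compatible with $(G,G^\wedge)$ (Proposition \ref{bar1}, Remark \ref{eq}(ii)), the fact that a locally compact group topology is $g$-barrelled and locally quasi-convex, and the identification of that unique topology with $\tau_{\mathcal K}$, which is not locally compact. The only cosmetic difference is that you phrase it as a contradiction starting from $(G,\tau^+)\in\mathcal{B}$, while the paper argues contrapositively that any locally compact $\mu$ cannot be compatible with $\tau$, so $\mu^+\neq\tau^+$.
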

\begin{proof}
This is an easy consequence of the uniqueness of a $g$-barrelled  locally quasi-convex topology on $G$ compatible with  the duality $(G, G^\wedge)$. The topology $\tau_{\mathcal K}$ already meets these requirements. If $G$ could be equipped with a locally compact topology $\mu$, then $(G, \mu) $ would be a $g$-barrelled, locally quasi-convex group. Therefore $\mu$ cannot be compatible with $\tau$. Thus,  $\mu^+ \neq \tau^+$ and  $(G, \tau^+) \notin \mathcal{B}$.
\end{proof}
\begin{pro}
	The class $\mathcal B$ is not (countably) productive.
	
\end{pro}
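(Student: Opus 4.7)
The plan is to exhibit a countable family of members of $\mathcal B$ whose product fails to lie in $\mathcal B$, using Proposition~\ref{lc} as the main engine.

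For every $n \in \N$ I would take $(G_n, w_n) := (\R, \sigma(\R, \T))$; since $\R$ with its Euclidean topology $\rho$ is locally compact and $\rho^+ = \sigma(\R, \T)$, each $(G_n, w_n)$ lies in $\mathcal B$ by the very definition of this class. Put $G := \R^\N$ and consider simultaneously on $G$ the product $w := \prod_n w_n$ of the Bohr topologies (a precompact Hausdorff group topology) and the usual product topology $\tau$ coming from the Euclidean topologies of the factors.

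Two preparatory observations are all that is needed. First, continuous characters on a product of topological abelian groups vanish on all but finitely many coordinates, so $(G, \tau)^\wedge$ and $(G, w)^\wedge$ both coincide algebraically with the direct sum $\R^{(\N)}$; in particular $w = \tau^+$. Second, $(G, \tau)$ is a complete metrizable locally convex topological vector space, hence locally quasi-convex as a topological group and, as recalled at the beginning of Section~\ref{main}, $g$-barrelled. Invoking Proposition~\ref{bar1} (equivalently, the uniqueness statement in Remark~\ref{eq}(ii)) gives $\tau = \tau_{\mathcal K}$, so in particular $\tau_{\mathcal K}$ is compatible with $\tau$.

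The conclusion then comes directly from Proposition~\ref{lc}: since $\R^\N$ is plainly not locally compact, one obtains $(G, \tau^+) = (G, w) \notin \mathcal B$, whereas each factor $(G_n, w_n)$ belongs to $\mathcal B$; hence $\mathcal B$ is not closed under countable products. No substantive obstacle is anticipated; the only mild technicality is the routine identification of the dual of a countable product of topological abelian groups with the direct sum of the duals of the factors, which for $G_n = \R$ follows from the fact that any continuous character on $\R^\N$ whose kernel contains a cofinite product of lines is trivial on that subspace.
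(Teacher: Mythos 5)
Your proof is correct. The skeleton is the same as the paper's: exhibit, in the duality of a countable product of members of $\mathcal B$, a locally quasi-convex $g$-barrelled topology that is not locally compact, and then conclude via Proposition~\ref{lc} (together with the uniqueness statement behind Proposition~\ref{bar1}) that the product of the Bohr topologies is not in $\mathcal B$. The difference lies in how that $g$-barrelled topology is produced and in the scope of the conclusion. The paper argues for an \emph{arbitrary} family $\{(G_i,w_i)\}_{i\in I}$ of non-compact members of $\mathcal B$ with $|I|\geq\aleph_0$: it takes the product $\prod\tau_i$ of the witnessing locally compact topologies and invokes the permanence result that products of $g$-barrelled groups are $g$-barrelled (\cite[3.4]{BCDM}), so it shows that \emph{every} such product fails to be in $\mathcal B$. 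You instead fix the single example $(G_n,w_n)=(\R,\sigma(\R,\R^\wedge))$ and get $g$-barrelledness of $\R^\N$ from the fact that complete metrizable groups are $g$-barrelled, which is more elementary (no need for the product permanence theorem) but yields only one counterexample rather than the general statement implicit in the paper's argument. Both proofs need the identification of the dual of the product with the direct sum of the duals (so that $\prod w_n=\tau^+$ and $\tau$ is compatible with the product duality); you make this explicit, while the paper absorbs it into the assertion that $\prod w_i$ is the minimum compatible locally quasi-convex topology. A small notational quibble: write the Bohr topology of $\R$ as $\sigma(\R,\R^\wedge)$ rather than $\sigma(\R,\T)$, since the second argument should be the group of characters, not their target.
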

\begin{proof}
	Take a family $\{ (G_i, w_i) \in \mathcal{B}, ~  i\in I \}$, whose members are non-compact and $|I| \geq \aleph_0$. By the definition of $\mathcal B$, for every $i \in I$ there exists  a locally compact topology $\tau_i $ in $G_i$ such that $(G_i, \tau_i^+)   = (G_i, w_i)$.
	Observe that the product $ G: = \prod (G_i, w_i)$ is a precompact Hausdorff  group. Further, the product topology $\prod w_i$ is the minimum of all the locally quasi-convex  topologies  on $G$ compatible with  the duality $(G, G^\wedge)$.\\
	 Since the product of $g$-barrelled groups  is also $g$-barrelled (\cite[3.4]{BCDM} ), $\prod  \tau_i$ is a $g$-barrelled locally quasi-convex topology in the duality $(G, G^\wedge)$.
	 Clearly (even if $I$ is countable ) $\prod  \tau_i$ is not locally compact, and Proposition \ref{lc} applies.
\end{proof}


 \section{ The family  $\mathcal D_G$  of  compatible  topologies on a discrete group G}

Let  $G$ be a group,  $\delta$ the discrete topology on $G$   and    $\mathcal{D}_G$    the family of all group topologies on $G$ compatible with $\delta$.
 All the  elements  in $\mathcal{D}_G$ lie between $\delta^+$ and $\delta$ and  have $Hom (G, \T)$ as character group.   Glicksberg Theorem applied to $(G, \delta)$ yields that   any topology $\tau \in \mathcal{D}_G$ has the same family of compact subsets as $\delta$.
 Thus,  the topologies in $\mathcal{D}_G$ give rise to  the same dual group, algebraically and topologically.\\ Let us write:  $(G, \tau)^\wedge = Hom_p(G, \T)$, for all $\tau \in \mathcal{D}_G$.

 We characterize now the family $ \mathcal{D}_G$ in the class of   MAP topological groups. 

 \begin{pro}\label{DD}
Let $(G, \tau)$ be a MAP group. The following statements are equivalent:
\begin{itemize}
	\item[(1)] $\tau \in  \mathcal D_G$.
	\item[(2)] $(G, \tau)^\wedge$ is  a  compact group and the $\tau$-compact subsets of $G$ are finite.
	\item[(3)] $ (G^\wedge, \sigma (G^\wedge, G))$ is a compact group.
\end{itemize} 	
\end{pro}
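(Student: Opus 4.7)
The plan is to establish the cyclic implications $(1)\Rightarrow(2)\Rightarrow(3)\Rightarrow(1)$; the first two are bookkeeping based on the preamble of the section, while the substantive content lies in $(3)\Rightarrow(1)$, which rests on Pontryagin duality plus the MAP hypothesis.

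For $(1)\Rightarrow(2)$: from $\tau\in\mathcal{D}_G$ one has $(G,\tau)^\wedge=(G,\delta)^\wedge=Hom(G,\T)$ as abstract groups. Glicksberg's theorem, applied to the LCA group $(G,\delta)$ and already invoked in the preamble, forces the $\tau$-compact subsets of $G$ to coincide with the finite subsets. Since compacts are finite, the compact-open topology on $G^\wedge$ reduces to the pointwise convergence topology, identifying $(G,\tau)^\wedge$ with $Hom_p(G,\T)$, which is a closed subgroup of $\T^G$ and hence compact. The step $(2)\Rightarrow(3)$ is shorter still: the finiteness of $\tau$-compact subsets makes the compact-open topology on $G^\wedge$ equal to $\sigma(G^\wedge,G)$, so $(G^\wedge,\sigma(G^\wedge,G))=(G,\tau)^\wedge$ is compact by hypothesis.

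The main step is $(3)\Rightarrow(1)$. Since $\sigma(G^\wedge,G)$ is by construction the restriction to $G^\wedge$ of the pointwise convergence topology on $Hom_p(G,\T)$, compactness of $(G^\wedge,\sigma(G^\wedge,G))$ realizes $G^\wedge$ as a closed subgroup of the compact abelian group $Hom_p(G,\T)=(G,\delta)^\wedge$. I would then appeal to Pontryagin duality: the Pontryagin dual of the compact group $Hom_p(G,\T)$ is the discrete group $G$, the evaluations providing the identification. By the standard annihilator correspondence between closed subgroups of a compact abelian group and subgroups of its discrete dual, the closed subgroup $G^\wedge$ coincides with the whole of $Hom(G,\T)$ if and only if its annihilator $\{x\in G:\chi(x)=1\text{ for all }\chi\in G^\wedge\}$ in $G$ is trivial, which is precisely the MAP hypothesis on $(G,\tau)$. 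Hence $G^\wedge=Hom(G,\T)$ and $\tau\in\mathcal{D}_G$. The only real obstacle is justifying this annihilator step cleanly, making sure that the inclusion $(G^\wedge,\sigma(G^\wedge,G))\hookrightarrow Hom_p(G,\T)$ is a topological embedding (not merely a continuous injection) so that compactness transfers to closedness; everything else reduces to unpacking the definitions and preamble already in place.
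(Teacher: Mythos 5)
Your proof is correct and takes essentially the same route as the paper: the same cycle $(1)\Rightarrow(2)\Rightarrow(3)\Rightarrow(1)$, with Glicksberg giving finiteness of the $\tau$-compact sets, the resulting identification of the compact-open and pointwise topologies on $G^\wedge$, and, for $(3)\Rightarrow(1)$, Pontryagin duality for the compact group $Hom_p(G,\T)$ (whose characters are the evaluations at points of $G$) combined with MAP to conclude that the closed subgroup $G^\wedge$ is all of $Hom(G,\T)$. The paper words this last step as a direct contradiction via a nontrivial character vanishing on a proper closed subgroup rather than through the annihilator correspondence, but the content is identical, and your embedding concern is settled exactly as you note, since $\sigma(G^\wedge,G)$ is by definition the restriction of the pointwise convergence topology.
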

\begin{proof}
	
(1)	$\Rightarrow$ (2).
 As said in the preceding comments, the $\tau$-compact subsets of $G$ are finite and  $(G, \tau)^\wedge = Hom_p(G, \T)$.  
  Since  $Hom_p(G, \T)$ carries the pointwise convergence topology, it is a  closed subgroup in the product $\T^G$. Therefore $Hom_p(G, \T)$ is a compact Hausdorff  group.

   (2) $\Rightarrow $ (3).
    Clearly, if the $\tau$-compact subsets are finite,  the compact-open topology in $G^\wedge$  coincides with the pointwise convergence topology, thus $ (G^\wedge, \sigma (G^\wedge, G)) = (G, \tau)^\wedge$ is compact. 

(3)	$\Rightarrow $ (1).  Observe that
  $ (G^\wedge, \sigma (G^\wedge, G))$  is a closed   subgroup of the compact group $ Hom_p(G, \T) $.
 Assume by contradiction that   $G^\wedge \neq Hom (G, \T) $. Then,   there  exists a non-null continuous  character on  $Hom_p(G, \T)$ which is null in $G^\wedge$. Since  the continuous characters on  $Hom_p(G, \T)$ are precisely the evaluations on points of $G$, there must exist a non null $x \in G$ such that $\phi (x) = 1$ for all $\phi \in G^\wedge$. This contradicts the fact that $(G, \tau)$ is MAP.
Therefore, it must be $G^\wedge = Hom (G, \T)$ which proves (1).
	\end{proof}

\begin{cor} On an abstract group $G$, the family   $\mathcal D_G$ does not contain any nondiscrete $k$-group  topology. In particular, every metrizable nondiscrete group has discontinuous characters.
	
\end{cor}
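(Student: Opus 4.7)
The plan is to reduce both assertions to Lemma \ref{k-group}, exploiting the fact that every topology $\tau \in \mathcal{D}_G$ has only finite compact subsets. First I would fix an arbitrary $\tau \in \mathcal{D}_G$ and recall the observation made just above Proposition \ref{DD}: by Glicksberg's theorem applied to $(G, \delta)$, the $\tau$-compact subsets of $G$ coincide with the $\delta$-compact subsets, which are precisely the finite subsets of $G$. Since every topology in $\mathcal{D}_G$ lies above the Hausdorff Bohr topology $\delta^+$, both $\tau$ and $\delta$ are Hausdorff; as every Hausdorff topology on a finite set is discrete, $\tau$ and $\delta$ induce exactly the same (discrete) topology on every $\tau$-compact subset of $G$.

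Next I would invoke Lemma \ref{k-group}: if $\tau$ is in addition a $k$-group topology, then $\tau$ is the finest group topology on $G$ that coincides with $\tau$ on every $\tau$-compact subset. Since $\delta$ is itself a group topology sharing this property (by the previous paragraph), we obtain $\delta \leq \tau$. Combined with the trivial inequality $\tau \leq \delta$ valid for any element of $\mathcal{D}_G$, this yields $\tau = \delta$, which is the first statement of the corollary.

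For the ``in particular'' clause, let $(G, \tau)$ be a metrizable nondiscrete (abelian) topological group. Metric spaces are first countable and hence $k$-spaces, so $(G, \tau)$ is a $k$-space and consequently, as recalled in Section \ref{k-groups}, a $k$-group. If $(G, \tau)$ admitted no discontinuous characters, that is $(G, \tau)^\wedge = Hom(G, \T)$, then $\tau$ would be compatible with $\delta$, i.e.\ $\tau \in \mathcal{D}_G$; the previous paragraph would then force $\tau = \delta$, contradicting the nondiscreteness of $\tau$. Hence $(G, \tau)$ must admit at least one discontinuous character.

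The main, and essentially only, obstacle is locating the correct tool in the toolkit developed earlier, namely the characterization of $k$-groups given by Lemma \ref{k-group}; once it is in hand the rest of the argument reduces to the elementary remark that all Hausdorff topologies on a finite set coincide.
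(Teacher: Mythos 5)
Your proof is correct and follows essentially the same route as the paper: both arguments combine Glicksberg's theorem (all topologies in $\mathcal D_G$ have only finite compact sets, on which every Hausdorff topology is discrete) with the characterization of $k$-groups via Lemma \ref{k-group} to conclude that $\delta$ is the only possible $k$-group topology in $\mathcal D_G$, and the ``in particular'' clause is handled identically through metrizable $\Rightarrow$ $k$-space $\Rightarrow$ $k$-group. The only difference is cosmetic: the paper phrases the first step as ``at most one $k$-group topology with a given family of compacta,'' while you spell out the maximality argument $\delta\leq\tau\leq\delta$ explicitly.
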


\begin{proof}
	Since all the     topologies compatible with $\delta$  give rise to the same  family of compact subsets, there is at most one  $k$-group topology in $\mathcal D_G$. On the other hand, $\delta$ is already a $k$-group topology in $\mathcal D_G$. Thus the first claim is proved.\\
	If   $\mu$ is a metrizable nondiscrete  group topology  on $G$, it   
is a $k$-group topology. Therefore
  $\mu \notin \mathcal {D}_G$, which means that
     $(G, \mu)^\wedge \neq Hom(G, \T)$.
	\end{proof}
\begin{cor} 
	Every nondiscrete Mackey group (or $g$-barrelled group)  $G$ admits non-continuous characters.
\end{cor}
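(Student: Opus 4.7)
My plan is to argue by contradiction: assume $(G,\tau)$ is a nondiscrete Mackey (resp.\ $g$-barrelled) group in which every character is continuous, i.e.\ $(G,\tau)^\wedge=\mathrm{Hom}(G,\T)$. This puts $\tau$ into the family $\mathcal D_G$ of topologies compatible with $\delta$, and I then want to squeeze $\tau$ up to $\delta$ to reach a contradiction with the nondiscreteness of $\tau$.

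First I handle the Mackey case, which is essentially immediate. Since $G$ is abelian, $\mathrm{Hom}(G,\T)$ separates points, so $G$ is MAP and Proposition~\ref{DD} applies. The discrete topology $\delta$ is trivially locally quasi-convex, and under our assumption $\delta\in\mathcal D_G$, that is, $\delta$ is compatible with $\tau$. By the very definition of Mackey topology, $\tau$ is the maximum of all locally quasi-convex topologies compatible with $\tau$, whence $\delta\le\tau$. Combined with the always-valid $\tau\le\delta$, this forces $\tau=\delta$, contradicting nondiscreteness.

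For the $g$-barrelled case the argument is slightly more delicate; this is the step I expect to be the main obstacle. From $\tau\in\mathcal D_G$ and Proposition~\ref{DD} I get that $(G^\wedge,\sigma(G^\wedge,G))=\mathrm{Hom}_p(G,\T)$ is compact, so in particular the whole group $G^\wedge$ is a $\sigma(G^\wedge,G)$-compact subset of itself. By $g$-barrelledness of $\tau$, $G^\wedge$ is equicontinuous, so there exists a $\tau$-neighbourhood $U$ of $0$ with $G^\wedge\subset U^\triangleright$, equivalently $U\subset (G^\wedge)^\triangleleft$. The key computation is then $(G^\wedge)^\triangleleft=\{0\}$: given $x\neq 0$, pick $\phi\in\mathrm{Hom}(G,\T)$ with $\phi(x)\neq 1$ (possible since $\mathrm{Hom}(G,\T)$ separates points), write $\phi(x)=e^{2\pi it}$ with $t\not\equiv 0\pmod 1$, and choose $n\in\Z$ with $nt\bmod 1\in[1/4,3/4]$; then $n\phi\in\mathrm{Hom}(G,\T)$ and $(n\phi)(x)\notin\T_+$, so $x\notin(G^\wedge)^\triangleleft$. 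Thus $U=\{0\}$, so $\tau$ is discrete, contradiction.

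Alternatively, for the $g$-barrelled case one can avoid the direct polar computation by invoking the uniqueness result in Remarks~\ref{eq}(ii): $\mathcal Q\tau$ is a $g$-barrelled locally quasi-convex topology compatible with $\tau$, hence equals the unique such topology $\tau_\mathcal{K}$ of Proposition~\ref{bar1}; but when $\tau\in\mathcal D_G$, $\mathcal K$ contains the whole compact group $G^\wedge$, whose inverse polar is $\{0\}$, so $\tau_\mathcal{K}=\delta$ and therefore $\tau\ge\mathcal Q\tau=\delta$, again forcing $\tau=\delta$. Either route closes the argument.
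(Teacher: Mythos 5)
Your proof is correct, and while your Mackey-case argument coincides with the paper's (if every character were continuous, $\delta$ would be a locally quasi-convex topology compatible with $\tau$, and Mackey maximality forces $\tau=\delta$), your primary handling of the $g$-barrelled case takes a genuinely different route. The paper reduces that case to the Mackey one: it replaces $\mu$ by its locally quasi-convex modification $\mathcal{Q}\mu$, which by Remark \ref{eq}(i) is locally quasi-convex, $g$-barrelled and has the same dual, hence is a Mackey topology by \cite[4.1]{CMT}, so the first argument applies. You instead argue directly from the definition: under the assumption $(G,\tau)^\wedge=Hom(G,\T)$, the dual with the topology $\sigma(G^\wedge,G)$ is $Hom_p(G,\T)$, a closed subgroup of $\T^G$ and therefore compact, so $g$-barrelledness makes the entire dual equicontinuous, i.e.\ $G^\wedge\subseteq U^\triangleright$ for some $\tau$-neighbourhood $U$; your computation $(Hom(G,\T))^\triangleleft=\{0\}$ is sound (characters separate points of an abelian group, and $\T_+$ contains no nontrivial subgroup, which is exactly what your choice of the multiple $n$ establishes), whence $U=\{0\}$ and $\tau=\delta$, a contradiction. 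This direct argument is more elementary and self-contained: it bypasses both the locally quasi-convex modification and the cited fact that locally quasi-convex $g$-barrelled topologies are Mackey, at the cost of redoing a small polar computation; the paper's route is shorter given the machinery it already quotes. Your alternative route via $\mathcal{Q}\tau=\tau_{\mathcal K}$ (Remark \ref{eq}(ii), Proposition \ref{bar1}) is also valid and is closer in spirit to the paper's reduction. A minor remark: the appeal to Proposition \ref{DD} is not really needed in either case; all you use is compatibility with $\delta$ and the compactness of $Hom_p(G,\T)$, which you in any case reprove on the spot.
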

\begin{proof} Let $(G, \mu) $ be a Mackey group. Then $ (G, \mu)^\wedge \neq Hom (G, \T)$, for otherwise $\tau $ would be compatible with $\delta$ and by the assumption $\mu = \delta$. 
	If $(G, \mu)$ is $g$-barrelled, the previous argument  can be applied to $(G, \mathcal{Q} \mu)$, which  by  Remark \ref{eq} (i)   is a  Mackey group, and admits the same character group as $(G, \mu) $. 
\end{proof}	

 The property of ``having finite compact subsets" is not sufficient to characterize the  elements of   $\mathcal {D}_G$ in the class of MAP topological groups. The next statement gives a related feature.
\begin{pro} \label{semi}
Let  $(G, \tau)$ be a topological group whose compact subsets are finite.  Then, $(G, \tau)$ is semireflexive.
\end{pro}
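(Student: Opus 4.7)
The plan is to show that the canonical evaluation $e:G\to G^{\wedge\wedge}$ is surjective (injectivity being the MAP condition that goes with it). The starting observation is that because every compact subset of $(G,\tau)$ is finite, the compact--open topology on $G^\wedge$ coincides with the pointwise--convergence topology $\sigma(G^\wedge,G)$. Consequently $G^{\wedge\wedge}=(G^\wedge,\sigma(G^\wedge,G))^\wedge$ and the task reduces to showing that every continuous character $\chi:(G^\wedge,\sigma(G^\wedge,G))\to\T$ has the form $\chi=\tilde y$ for some $y\in G$.

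The idea is to transport $\chi$ up to the ambient compact group $\T^G$ in two standard steps. View $G^\wedge$ as a topological subgroup of $\T^G$ (both carrying pointwise convergence), and let $K$ denote the closure of $G^\wedge$ in $\T^G$. Since $\chi$ is a continuous group homomorphism into $\T$, it is uniformly continuous, and $K$ is complete (being compact), so $\chi$ extends by density to a continuous character $\widehat{\chi}:K\to\T$. Next, $K$ is a closed subgroup of the compact abelian group $\T^G$; the standard Pontryagin--duality fact that characters of closed subgroups of compact abelian groups extend (equivalently, the restriction map $\widehat{\T^G}\to\widehat{K}$ is surjective) produces a continuous character $\Phi:\T^G\to\T$ with $\Phi|_{G^\wedge}=\chi$.

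To finish, I would invoke the identification $\widehat{\T^G}=\Z^{(G)}$: every continuous character on $\T^G$ is of the form $f\mapsto\sum_{x\in F}n_x\,f(x)$ for some finite $F\subseteq G$ and integers $n_x$. Writing $\Phi$ in this form and setting $y:=\sum_{x\in F}n_x\,x\in G$, one uses the crucial fact that elements $\phi\in G^\wedge$ are group homomorphisms to compute
\[
\chi(\phi)=\Phi(\phi)=\sum_{x\in F}n_x\,\phi(x)=\phi(y)=\tilde{y}(\phi)\qquad(\phi\in G^\wedge),
\]
so $\chi=e(y)$ and $e$ is onto.

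The step I expect to be the main technical point is the extension of $\chi$ from $G^\wedge$ to all of $\T^G$, which has to be done in two stages because $G^\wedge$ need not be closed in $\T^G$: uniform continuity yields the extension to the closure $K$, and the compact--group character--extension theorem yields the extension from $K$ to $\T^G$. Once this is in place, the identification of $\Phi$ as an integer combination of coordinate evaluations and the rewriting as evaluation at $y=\sum n_x x$ are purely formal.
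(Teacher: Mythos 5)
Your proof is correct, and its skeleton coincides with the paper's: since the compact subsets of $G$ are finite, the compact-open topology on $G^\wedge$ coincides with $\sigma(G^\wedge,G)$, so semireflexivity reduces to showing that every continuous character of $(G^\wedge,\sigma(G^\wedge,G))$ is an evaluation $\tilde y$ for some $y\in G$. The difference lies in how that last fact is obtained: the paper simply invokes it (it is the Comfort--Ross theorem, already used in the proof of Proposition \ref{bar2}, that the dual of a precompact topology of the form $\sigma(X,H)$ is exactly $H$), whereas you reprove it from scratch by viewing $G^\wedge$ as a topological subgroup of $\T^G$, extending the character first to the compact closure by uniform continuity and completeness of $\T$, then to all of $\T^G$ by the character extension theorem for closed subgroups of compact abelian groups, and finally using $(\T^G)^\wedge\cong\Z^{(G)}$ together with the homomorphism property of the elements of $G^\wedge$ to write the extension as evaluation at $y=\sum_{x\in F}n_x x$. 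All of these steps are sound, so your argument buys a self-contained proof at the cost of length, while the paper's two-line proof leans on a known duality result. One further remark: you establish surjectivity of the evaluation map $e\colon G\to G^{\wedge\wedge}$ and correctly observe that injectivity is exactly the MAP condition; this is in fact slightly more careful than the paper's phrase ``$G^{\wedge\wedge}$ and $G$ are isomorphic as groups'', which tacitly identifies $G$ with its image $e(G)$.
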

\begin{proof}
	 The Pontryagin  dual of $(G, \tau)$ is   $ (G^\wedge, \sigma ( G^\wedge, G)  )$.
Since the character group  of $(G^\wedge, \sigma ( G^\wedge, G)  )$ is algebraically  isomorphic to $G$, we have that  $G^{\wedge \wedge}$ and $G$ are isomorphic as groups.
\end{proof}

The more restrictive assumption that the
 $\sigma (G, G^\wedge)$-compact subsets of $G$ are finite, yields $g$-barrelledness,  as specified next:
\begin{pro} \label{DB}
		Let   $(G, \tau)$ be a topological group. The following statements are equivalent:
	\begin{itemize}
	\item[(1)] The   $\sigma(G, G^\wedge)$-compact subsets of $G$ are finite.
	\item[(2)] The Pontryagin dual of $(G, \tau)$ coincides with $  (G^\wedge, \sigma (G^\wedge, G))$ and it is $g$-barrelled.
\end{itemize} 	
\end{pro}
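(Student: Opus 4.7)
My overall plan is to establish both implications by elementary polar calculus in the duality $(G, G^\wedge)$, reducing the crux to a single key lemma that I expect to be the main obstacle: \emph{for any finite subset $F$ of an abelian group, the quasi-convex hull $F^{\triangleright\triangleleft}$ is finite.} Once this is in hand, the rest is routine.

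For $(1) \Rightarrow (2)$ I would proceed as follows. Since $\sigma(G, G^\wedge) \le \tau$, every $\tau$-compact subset of $G$ is in particular $\sigma(G, G^\wedge)$-compact and hence finite by (1). It follows that the compact-open topology on $G^\wedge$ admits a base of zero-neighborhoods of the form $K^\triangleright$ with $K$ finite, which is precisely a base for $\sigma(G^\wedge, G)$, so the Pontryagin dual of $(G, \tau)$ is $(G^\wedge, \sigma(G^\wedge, G))$. For $g$-barrelledness of the latter, the Comfort--Ross theorem identifies its dual with $G$ and the associated weak topology with $\sigma(G, G^\wedge)$; any $\sigma(G, G^\wedge)$-compact $L \subseteq G$ is finite by (1), so $V := L^\triangleright$ is a basic $\sigma(G^\wedge, G)$-neighborhood of $0$, and $L \subseteq L^{\triangleright\triangleleft} = V^\triangleleft$ exhibits $L$ as equicontinuous.

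For $(2) \Rightarrow (1)$: let $L \subseteq G$ be $\sigma(G, G^\wedge)$-compact. The $g$-barrelledness of $(G^\wedge, \sigma(G^\wedge, G))$ (whose dual is again $G$, by Comfort--Ross) forces $L$ to be equicontinuous with respect to $\sigma(G^\wedge, G)$, so $L \subseteq V^\triangleleft$ for some $\sigma(G^\wedge, G)$-neighborhood $V$ of $0$; refining $V$ to a basic neighborhood $F^\triangleright$ with $F \subseteq G$ finite gives $L \subseteq F^{\triangleright\triangleleft}$. The key lemma then forces $L$ to be finite.

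I would prove the key lemma in two steps. First, I claim $F^{\triangleright\triangleleft} \subseteq \langle F\rangle$: given $x \notin \langle F\rangle$, start from the trivial character on $\langle F\rangle$ (which certainly sends $F$ into $\T_+$) and extend it to some $\chi \in G^\wedge$ with $\chi(x) \notin \T_+$. If $x$ has infinite order modulo $\langle F\rangle$ one may prescribe $\chi(x)$ freely; if its order is $k \ge 2$, then $\chi(x)$ must be a $k$-th root of $1$, and since the interval $(k/4, 3k/4)$ has length $\ge 1$ it contains an integer $j$, so the root $e^{2\pi i j/k}$ lies outside $\T_+$; the divisibility of $\T$ supplies the extension to $G$. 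This reduces the problem to $H := \langle F\rangle$, a finitely generated, and therefore discrete LCA, group with compact Pontryagin dual $H^\wedge$. Second, $F$ is finite, hence compact in $H$, so $F^{\triangleright_H}$ is a zero-neighborhood in $H^\wedge$; by the standard LCA fact that polars of zero-neighborhoods are compact in the original group, $F^{\triangleright_H \triangleleft_H}$ is compact in the discrete group $H$ and is thus finite. Combining the two steps, $F^{\triangleright\triangleleft} = F^{\triangleright_H \triangleleft_H}$ is finite, which closes the argument.
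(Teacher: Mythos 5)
Your $(1)\Rightarrow(2)$ argument is fine and is essentially the paper's. The problem is in $(2)\Rightarrow(1)$, concretely in your key lemma. After the equicontinuity step you have $L\subseteq F^{\triangleright\triangleleft}$ where the polar $F^{\triangleright}=\{\chi\in G^\wedge:\chi(F)\subseteq\T_+\}$ is taken in $G^\wedge$, so the bipolar is the quasi-convex hull with respect to the duality $(G,G^\wedge)$. Your proof of the lemma, however, is a proof for the \emph{full} duality $(G,Hom(G,\T))$: the character you produce by extending the trivial character of $\langle F\rangle$ in step 1, and all the characters of $H=\langle F\rangle$ used in step 2, are abstract characters with no reason to be $\tau$-continuous, hence no reason to lie in $F^{\triangleright}$. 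Since $G^\wedge$ may be a proper (dense) subgroup of $Hom(G,\T)$, the hull you must control is in general strictly larger than the one you bound, so finiteness does not transfer. A concrete instance: let $G=\Z$ carry the precompact topology $\sigma(\Z,H)$ with $H=\langle e^{2\pi i\alpha}\rangle\le\T$, $\alpha$ irrational, so $G^\wedge=H$, and take $F=\{1,3\}$. Identifying characters $\chi_t(m)=e^{2\pi imt}$ with $t\in\R/\Z$, the full polar of $F$ is $[-\frac1{12},\frac1{12}]\cup\{\pm\frac14\}$ and the full bipolar is $\{0,\pm1,\pm3\}$, whereas $F^{\triangleright}\cap H$ misses the isolated points $\pm\frac14$ and the bipolar over $H$ is $\{0,\pm1,\pm2,\pm3\}$. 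Both happen to be finite here, but the example shows the two hulls genuinely differ, so your lemma (as proved) does not contain the set $L$ you need to bound; step 2 in particular breaks down because only the restrictions of elements of $G^\wedge$ to $\langle F\rangle$, not all of $Hom(\langle F\rangle,\T)$, are available.

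What you actually need is the statement the paper invokes tacitly at the same point of its proof: equicontinuous subsets of the dual of a precompact group are finite (equivalently, the $(G,G^\wedge)$-bipolar of a finite set is finite). This is true, and your outline can be repaired, but it costs an extra idea: when $G^\wedge$ separates points of $G$ it is dense in $Hom_p(G,\T)$, so the closure of $F^{\triangleright}\cap G^\wedge$ in this compact group contains the open set $U=\{\chi:\chi(F)\subseteq\mathrm{int}\,\T_+\}$; since evaluations are continuous and $\T_+$ is closed, $F^{\triangleright\triangleleft}\subseteq U^{\triangleleft}$, and $U^{\triangleleft}$ is the inverse polar of a neighborhood of zero of a compact group, hence compact in the discrete dual and therefore finite (equivalently, pass to the compact completion of $(G^\wedge,\sigma(G^\wedge,G))$). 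Note that this repair, like the Comfort--Ross identifications you and the paper both use, quietly requires $G^\wedge$ to separate points of $G$. The paper's own proof is shorter precisely because it stops at ``equicontinuous with respect to a precompact topology, therefore finite'' and treats that as known; your attempt to prove it via reduction to $\langle F\rangle$ and its compact dual is the step that, as written, fails.
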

\begin{proof}
	(1) $ \Rightarrow $ (2). Since every $\tau$-compact subset of $G$ is $\sigma (G, G^\wedge)$-compact, as in the proof of \ref{semi},  we obtain that   $ (G^\wedge, \sigma (G^\wedge, G))$ is the Pontryagin dual of $(G, \tau)$. 
	 In order to prove that  $X: = (G^\wedge, \sigma (G^\wedge, G))$ is $g$-barrelled, fix
$K \subset X^\wedge$  compact  with respect to $\sigma (X^\wedge, X)$. Take into account that 	$X^\wedge$ and $G$ are algebraically isomorphic  and the topology $\sigma (X^\wedge, X)$  can be identified with $ \sigma (G, G^\wedge)$. Thus, $K$ can be considered a   $\sigma (G, G^\wedge)$-compact subset of $G$. By (1) $K$ is finite and therefore equicontinuous.

(2) $\Rightarrow$ (1).
Fix a $\sigma (G, G^\wedge)$-compact subset  $L \subset G$.
By the above mentioned  identifications,
 $L$ can be considered as a $\sigma (X^\wedge, X)$-compact subset of $X^\wedge$.
Since  $ X =  (G^\wedge, \sigma (G^\wedge, G))$ is $g$-barrelled, $L$ is equicontinuous with respect to $ \sigma (G^\wedge, G) $, which is a precompact topology. Therefore $L$ is finite.

\end{proof}

If $\tau \in \mathcal D_G$, clearly $\tau$ satisfies (1) and (2) in Proposition  \ref{DB}.  The converse does not hold as the next example shows.
\begin{ex}\label{no pseudocpt}
		{\bf A  topological group $G$ which is not in $\mathcal D_G$ and  the  $\sigma (G, G^\wedge)$-compact subsets are finite.}
		
			{\em Let $\mathcal L$ be a second category subgroup of $\T$ and consider $\T$ as the group of characters on $\Z$.
		  If $G : = (\Z, \sigma(\Z, \mathcal L))$, clearly $G$ is a precompact Hausdorff group
		 such that  $G^\wedge = \mathcal L$. The $\sigma (G^\wedge, G)$-topology on $\mathcal L$ coincides wih the topology induced on $\mathcal L$ as a subspace of $\T$. Thus, $\mathcal L$ is separable and   $  (G^\wedge, \sigma (G^\wedge, G))$ is $g$-barrelled (by \cite[1.6]{CMT}). Since  $G$ satisfies (2) in Proposition \ref{DB},  the $\sigma (\Z, \mathcal L)$-compact subsets of $\Z$ are finite. Obviously, $\sigma(\Z, \mathcal L) \notin \mathcal D_G$. }

		 {\em Observe that $G$ is not reflexive: its dual group is $\mathcal L$ and the bidual $G^{\wedge \wedge}$ is algebraically isomorphic to  $\Z$, but the compact-open  topology on $G^{\wedge \wedge}$ is discrete. In fact, being $\mathcal L$ dense in the metrizable compact group $\T$, it has the same dual as $\T$ algebraically and topologically. Since $G$ is non-discrete, it is not topologically isomorphic to  $G^{\wedge \wedge}$.
		
		  The starting group $G$ is not $g$-barrelled, because it is  countable and  nondiscrete. Its Pontryagin dual $  (G^\wedge, \sigma (G^\wedge, G))$ is $g$-barrelled, metrizable, noncompact, neither countably compact. }
\end{ex}


\end{document}